\theoremstyle{plain}
\newtheorem{prop}{Proposition}[section]
\newtheorem{thm}[prop]{Theorem}
\newtheorem{lem}[prop]{Lemma}
\newtheorem{cor}[prop]{Corollary}
\theoremstyle{definition}
\newtheorem{defn}[prop]{Definition}
\newtheorem{ex}[prop]{Example}
\newtheorem{rem}[prop]{Remark}
\newenvironment{psmallmatrix}
  {\left(\begin{smallmatrix}}
  {\end{smallmatrix}\right)}
  \newtheoremstyle{TheoremNum}
        {8.0pt plus 2.0pt minus 4.0pt}
        {8.0pt plus 2.0pt minus 4.0pt}              
        {\itshape}                      
        {}                              
        {\bfseries}                     
        {.}                             
        { .5em }                             
        {\thmname{#1}\thmnote{ \bfseries #3}}
    \theoremstyle{TheoremNum}
    \newtheorem{thm_n}{Theorem}
\author{Brandon Williams }
\subjclass[2010]{11F27,11F55,11F60}
\address{Fachbereich Mathematik \\ Technische Universit\"at Darmstadt \\ 64289 Darmstadt, Germany}
\email{bwilliams@mathematik.tu-darmstadt.de}
\thanks{This research is supported by a postdoctoral fellowship of the LOEWE research unit Uniformized Structures in Arithmetic and Geometry.}
\begin{document}

\nocite{*}

\title{Higher pullbacks of modular forms on orthogonal groups}

\begin{abstract} We apply differential operators to modular forms on orthogonal groups $\mathrm{O}(2,\ell)$ to construct infinite families of modular forms on special cycles. These operators generalize the quasi-pullback. The subspaces of theta lifts are preserved; in particular, the higher pullbacks of the lift of a (lattice-index) Jacobi form $\phi$ are theta lifts of partial development coefficients of $\phi$.  For certain lattices of signature $(2,2)$ and $(2,3)$, for which there are interpretations as Hilbert-Siegel modular forms, we observe that the higher pullbacks coincide with differential operators introduced by Cohen and Ibukiyama.
\end{abstract}

\maketitle

\section{Introduction}

In this note we apply differential operators to modular forms on Grassmannians of signature $(2,\ell)$ lattices to construct modular forms on special cycles of arbitrary codimension. The simplest example of such an operation is the \emph{pullback}: if $\phi : L \rightarrow \Lambda$ is an isometric embedding of lattices (which induces a map on modular varieties) then one obtains a map $\phi^*$ on modular forms by setting $\phi^*(F) := F \circ \phi$. Despite the simple definition, pullbacks are a surprisingly useful tool for constructing modular forms.

The \emph{quasipullback} is a renormalization of the pullback of $F$ when $\phi^* F$ is identically zero. When $L$ (or rather its image under $\phi$) has codimension $1$ in $\Lambda$ the quasipullback of $F$ is essentially the leading term in a Taylor expansion of $F$ about the hypersurface defined by $\phi$. The quasipullbacks, especially of the weight $12$ Borcherds form $\Phi_{12}$ on the Grassmannian of $\mathrm{II}_{2,26}$, have seen important geometric applications, some of which are discussed in Chapter 8 of \cite{GHS}.

The motivation of this note is to develop a framework of linear ``higher pullback" maps $P_N$, $N \in \mathbb{N}_0$ to special cycles in which the true pullback is always $P_0$ and the quasipullback of a modular form $F$ to a Heegner divisor on which $F$ vanishes to order $N$ is (up to scalar multiple) the $N^{\text{th}}$ pullback $P_N F$. We also define higher pullbacks to special cycles of arbitrary codimension; in this case, $P_N F$ should be understood as a multilinear form which, to any $N$ normal vectors $v_1,...,v_N \in L^{\perp} \otimes \mathbb{C}$, constructs an orthogonal modular form $P_N^L F(z;v_1,...,v_n)$ on the modular variety attached to $L$ using the Laplacian on $L$ and the normal derivatives along the $v_i$:

\begin{thm_n}[\ref{pullbacks}] Let $F$ be a modular form of weight $k$ on the orthogonal Shimura variety attached to an even lattice $\Lambda$, and let $L \subseteq \Lambda$ be an even sublattice. For every order $N \in \mathbb{N}_0$, there are explicit differential operators $P_N^L$ depending linearly on $v_1,...,v_N \in L^{\perp} \otimes \mathbb{C}$ such that $P_N^L F(z;v_1,...,v_N)$ is a modular form of weight $k+N$ on the Shimura variety attached to $L$ and a cusp form if $N \ge 1$. 
\end{thm_n}

Moreover, if $F$ is the additive theta lift of a vector-valued modular form $f(\tau)$, then every value of $P_N^L F$ is also an additive theta lift. This makes it possible to compute the values $P_N^L F$ quickly in many cases of interest.

There are many instances in the literature where differential operators have been applied to construct modular forms. This note is heavily inspired by the \emph{development coefficients} of Eichler-Zagier (\cite{EZ}, Chapter 3) which produce elliptic modular forms from the Taylor coefficients of a Jacobi form. Generalizing this in some sense, Ibukiyama and Ibukiyama-Zagier (\cite{I},\cite{I2},\cite{IZ}) have applied differential operators involving \emph{higher spherical polynomials}, which generalize the classical Gegenbauer polynomials, to Hilbert-Siegel modular forms. While the Gegenbauer polynomials also appear naturally in the setting of this note, it is not clear to the author what role, if any, is played by the higher spherical polynomials in the context of orthogonal modular forms. In another direction, there are several generalizations of the \emph{Rankin-Cohen brackets} to multilinear operators on modular forms, including Rankin-Cohen brackets of (lattice-index) Jacobi forms (\cite{CE},\cite{CK}), on Siegel modular forms (\cite{EI},\cite{I}) and on orthogonal modular forms \cite{CK}.

This note is organized as follows. In section 2 we review modular forms on orthogonal groups and describe their behavior under the holomorphic Laplace operator. In section 3 we define pullbacks to Heegner divisors $\lambda^{\perp}$ as Gegenbauer polynomials in the Laplace operator on $\lambda^{\perp}$ and the directional derivative with respect to $\lambda$, and in section 4 we generalize this to arbitrary codimension. In section 5 we define partial development coefficients of lattice-index Jacobi forms and vector-valued modular forms and show that these fit into commutative diagrams involving the theta lift. In section 6 we work out examples involving quadratic forms of signature $(2,2)$ and $(2,3)$. \\

\textbf{Acknowledgments.} I thank Martin Raum for helpful comments, in particular his suggestion to consider Example 6.5.

\section{Orthogonal modular forms}

\subsection{The upper half-space} Let $(\Lambda_1,Q)$ be an even lattice of signature $(2,\ell)$ for some $\ell \in \mathbb{N}$, with induced bilinear form $$\langle x,y \rangle = Q(x+y) - Q(x) - Q(y).$$ The associated Hermitian symmetric domain is $$D^{\pm} = \Big\{ \text{norm-zero lines} \; z \in \mathbb{P}(\Lambda_1 \otimes \mathbb{C}) \; \text{such that} \; \langle z, \overline{z} \rangle > 0 \Big\}.$$ It splits into two connected components that we label $D^+$ and $D^-$.

We will almost always consider a special case which is better adapted to working with Fourier expansions: we assume there is an even lattice $(\Lambda,Q)$ of signature $(1,\ell-1)$ such that $(\Lambda_1,Q) = (\Lambda,Q) \oplus \mathrm{II}_{1,1}$ where $\mathrm{II}_{1,1}$ is the even unimodular lattice of signature $(1,1)$. (Certainly one does not obtain all interesting orthogonal modular varieties in this way; for example, certain compact Shimura curves arise from signature $(2,1)$ lattices without isotropic vectors; but most interesting cases can at least be embedded in a lattice of this type.) Without loss of generality one can take $\Lambda = \mathbb{Z}^{\ell}$ with quadratic form $Q(v) = \frac{1}{2}v^T \mathbf{S}v$ for a symmetric integral matrix $\mathbf{S}$ with even diagonal and identify $\Lambda_1 = \mathbb{Z}^{\ell + 2}$ with the Gram matrix $\mathbf{S}_1 = \begin{psmallmatrix} 0 & 0 & 1 \\ 0 & \mathbf{S} & 0 \\ 1 & 0 & 0 \end{psmallmatrix}$, which is written in blocks of size $1,\ell,1$.

Since $\Lambda$ has signature $(1,\ell-1)$ the set of positive-norm vectors in $\Lambda \otimes \mathbb{R}$ splits into two connected components, corresponding to the components of $D^{\pm}$. Suppose one component $P$ (the \emph{positive cone}) has been fixed. Then the orthogonal upper half-space attached to $\Lambda$ is $$\mathbb{H}_{\Lambda} = \{z = x+iy \in \Lambda \otimes \mathbb{C}: \; y \in P\},$$ and it embeds as an open dense subset of a component of $D^{\pm}$ (which we label $D^+$) by $$z \mapsto \mathrm{span}(-Q(z),z,1).$$ The group $G = \mathrm{SO}^+(\Lambda_1 \otimes \mathbb{R})$, i.e. the connected component of the identity in the orthogonal group of $\Lambda_1 \otimes \mathbb{R}$, acts on $\mathbb{H}_{\Lambda}$ by M\"obius transformations as follows. For any $M \in G$, understood as a matrix preserving $\mathbf{S}_1$, and $z \in \mathbb{H}_{\Lambda}$ there is a unique scalar $j(M;z) \in \mathbb{C}^{\times}$ and vector $w = M \cdot z \in \mathbb{H}_{\Lambda}$ such that $$M \begin{psmallmatrix} -Q(z) \\ z \\ 1 \end{psmallmatrix} = j(M;z) \begin{psmallmatrix} -Q(w) \\ w \\ 1 \end{psmallmatrix}.$$

The \emph{modular group} $\Gamma_{\Lambda}$ is the intersection of $G$ with the discriminant kernel of $\Lambda_1$. In other words, $$\Gamma_{\Lambda} = \{M \in G: \; M \; \text{acts trivially on} \; \Lambda_1' / \Lambda_1\}.$$ Elements $M \in \Gamma_{\Lambda}$ may be understood as those matrices $M \in \mathbb{Z}^{(\ell+2) \times (\ell+2)}$ such that $M^T \mathbf{S}_1 M = \mathbf{S}_1$ and $\mathrm{det}(M) = 1$ and $(M - I) \mathbf{S}_1^{-1}$ has integer entries.

It is difficult to produce generators of $\Gamma_{\Lambda}.$ However the differential operators we are concerned with behave nicely under the entire Lie group $G$, for which one can give a system of generators with simple factors of automorphy. The following result is well-known but we include it here for convenience.

\begin{prop} The group $G = \mathrm{SO}^+(\Lambda_1 \otimes \mathbb{R})$ is generated by operators of the following types. \\
(i) Translations: for any $b \in \Lambda \otimes \mathbb{R}$, define $T_b = \begin{psmallmatrix} 1 & -b^T \mathbf{S} & -Q(b) \\ 0 & I & b \\ 0 & 0 & 1 \end{psmallmatrix}$ which maps $z$ to $z+b$ and has cocycle of automorphy $j(T_b;z) = 1$. \\
(ii) Rotations: for any $A \in \mathrm{SO}^+(\Lambda)$ which acts trivially on $\Lambda'/\Lambda$, define $R_A = \begin{psmallmatrix} 1 & 0 & 0 \\ 0 & A & 0 \\ 0 & 0 & 1 \end{psmallmatrix}$ which maps $z$ to $Az$ and has cocycle of automorphy $j(R_A;z) = 1$. \\
(iii) Scalings: for any $t \in \mathbb{R}_{>0}$, define $S_t = \begin{psmallmatrix} t & 0 & 0 \\ 0 & I & 0 \\ 0 & 0 & t^{-1} \end{psmallmatrix}$ which maps $z$ to $tz$ with cocycle of automorphy $j(S_t;z) = t^{-1}$. \\
(iv) Inversions: for any $v \in \Lambda \otimes \mathbb{R}$ of norm $Q(v) = 1$, define $J_v = \begin{psmallmatrix} 0 & 0 & -1 \\ 0 & I - vv^T \mathbf{S} & 0 \\ -1 & 0 & 0 \end{psmallmatrix}$ which maps $z$ to $\frac{z - \langle v,z \rangle v}{Q(z)}$ with cocycle of automorphy $j(J_v;z) = Q(z)$.
\end{prop}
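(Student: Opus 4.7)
The plan is to first verify that each of $T_b$, $R_A$, $S_t$, $J_v$ genuinely lies in $G = \mathrm{SO}^+(\Lambda_1 \otimes \mathbb{R})$ and realizes the claimed action on $\mathbb{H}_{\Lambda}$. This is a direct matrix computation: one checks $M^T \mathbf{S}_1 M = \mathbf{S}_1$ and $\det(M) > 0$ for each type, and one applies the matrix to the column vector $(-Q(z), z, 1)^T$ to read off both the image $M \cdot z$ and the factor of automorphy $j(M;z)$. For the inversion this uses $Q(z - \langle v, z\rangle v) = Q(z)^2 / Q(z) \cdot (\text{sign check})$ once one expands; the middle block $I - vv^T \mathbf{S}$ of $J_v$ is a reflection in $v^\perp \subset \Lambda \otimes \mathbb{R}$, which together with the swap on the $\mathrm{II}_{1,1}$ block places $J_v$ in the identity component.

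For the generation claim, let $H \subseteq G$ be the subgroup generated by operators of the four types. I would exploit the Siegel parabolic $P \subseteq G$ stabilizing the isotropic line $\mathbb{R} e_1$ with $e_1 = (1,0,\ldots,0)^T$. The plan has two steps. \emph{Step 1:} show $P \subseteq H$. Any $M \in P$ preserves the flag $\mathbb{R} e_1 \subset e_1^{\perp} \subset \Lambda_1 \otimes \mathbb{R}$, and so has the block form corresponding to the Levi decomposition $P = L \ltimes N$. The unipotent radical $N$ consists exactly of the translations $T_b$, while the Levi $L \cong \mathbb{R}^{\times} \times \mathrm{O}(\Lambda \otimes \mathbb{R})$ is generated by the scalings $S_t$ and the rotations $R_A$ for $A \in \mathrm{SO}^+(\Lambda \otimes \mathbb{R})$ (up to discrete signs handled by further $J_v$'s). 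Writing $M$ explicitly in blocks and clearing off-diagonals with an appropriate $T_b$ reduces $M$ to a Levi element, which then factors into $S_t R_A$.

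\emph{Step 2:} show $H$ acts transitively on the set of nonzero isotropic vectors in $\Lambda_1 \otimes \mathbb{R}$. Witt's theorem gives this transitivity for $G$, so it suffices to check the $H$-orbit of $e_1$ is everything. The inversion $J_v$ takes $e_1$ to $-e_{\ell+2}$, and translations $T_b$ applied to $e_{\ell+2}$ produce isotropic vectors with arbitrary middle component at a fixed nonzero last coordinate; scalings and further translations then reach every nonzero isotropic vector. Combining the two steps: given $M \in G$, find $h \in H$ with $h \cdot e_1 = M \cdot e_1$; then $h^{-1} M \in P \subseteq H$ by Step 1, so $M \in H$.

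The main obstacle is Step 2, which requires ensuring enough vectors $v \in \Lambda \otimes \mathbb{R}$ with $Q(v) = 1$ exist that the available inversions suffice to move isotropic lines around freely. This is not an issue in our setting because $\Lambda$ has signature $(1,\ell-1)$, so norm-one vectors form an open subset of $\Lambda \otimes \mathbb{R}$; still, one must check a little care near the boundary cases, which is best handled by observing that a single nontrivial $J_v$ combined with the full parabolic $P$ already generates a Bruhat-like decomposition $G = P \cup P J_v P$ of the desired rank-one type on the flag of isotropic lines.
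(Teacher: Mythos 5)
Your verification that the four matrix types preserve $\mathbf{S}_1$ and act as claimed is fine, and your generation argument is a genuinely different route from the paper's: you propose a Bruhat-type argument (stabilizer of an isotropic line/vector plus transitivity of the generated subgroup $H$ on the isotropic cone), whereas the paper simply uses that $G$ is connected and exhibits a decomposition of $\mathfrak{g}$ into four summands each of which exponentiates into $H$ --- the only nontrivial point there being the identity $T^*_{-R_v b} = J_v T_b J_v$ producing the opposite unipotent radical. The paper's argument is shorter and sidesteps all orbit questions; yours is more elementary (no Lie theory) and makes the transitive action on the isotropic cone explicit, which is closer in spirit to how one usually proves such statements over $\mathbb{Z}$.

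There is, however, a genuine soft spot in your Step 1 as written. The stabilizer $P \cap G$ of the isotropic \emph{line} $\mathbb{R}e_1$ is strictly larger than $N \rtimes \{S_t R_A : t > 0,\, A \in \mathrm{SO}^+(\Lambda \otimes \mathbb{R})\}$: it also contains Levi elements $\mathrm{diag}(-t, A, -t^{-1})$ with $t>0$ and $A$ in the \emph{non-identity} component of $\mathrm{SO}(\Lambda \otimes \mathbb{R})$, and these do lie in $G = \mathrm{SO}^+(\Lambda_1 \otimes \mathbb{R})$ (they are products of two reflections in positive-norm vectors and two in negative-norm vectors). Your parenthetical ``discrete signs handled by further $J_v$'s'' cannot repair this inside $P$, because no $J_v$ stabilizes $\mathbb{R}e_1$ (each sends $e_1 \mapsto -e_{\ell+2}$), and products $J_vJ_w = R_{R_vR_w}$ only yield rotations in $\mathrm{SO}^+(\Lambda\otimes\mathbb{R})$. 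The clean fix is already implicit in your combination step: since you arrange $h\,e_1 = M\,e_1$ as \emph{vectors}, the element $h^{-1}M$ lies in the stabilizer of the vector $e_1$, which is exactly $\{T_b R_A : b \in \Lambda\otimes\mathbb{R},\, A \in \mathrm{SO}^+(\Lambda\otimes\mathbb{R})\}$ with no scaling and no extra component --- so you should state Step 1 for the vector stabilizer, and Step 2 must then give transitivity of $H$ on the $G$-orbit of the vector $e_1$ (not merely on isotropic lines), which your reachability argument via $T_b$, $S_t$, $J_v$ does provide after a short case analysis on the signs of the first and last coordinates (and, for $\ell = 1$, after restricting to the correct component of the light cone). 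One last remark applying to both proofs: part (ii) of the statement literally allows only $A$ in the discrete group $\mathrm{SO}^+(\Lambda)$ acting trivially on $\Lambda'/\Lambda$, but generating the real Lie group $G$ requires all of $\mathrm{SO}^+(\Lambda\otimes\mathbb{R})$, which is what the paper's own proof uses when it exponentiates $\mathfrak{so}(\Lambda)$; your reading is the correct one.
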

\begin{proof} It is straightforward to check that the matrices above are orthogonal with respect to $\mathbf{S}_1$ and act on $\mathbb{H}_{\Lambda}$ as described. $G$ is connected so we can generate it by exponentiating matrices which span its Lie algebra $\mathfrak{g}$. We decompose $$\mathfrak{g} = \begin{psmallmatrix} 0 & 0 & 0 \\ 0 & \mathfrak{so}(\Lambda) & 0 \\ 0 & 0 & 0 \end{psmallmatrix} \oplus \Big\{ \begin{psmallmatrix} 0 & -b^T \mathbf{S} & 0 \\ 0 & 0 & b \\ 0 & 0 & 0 \end{psmallmatrix}: \; b \in \Lambda \otimes \mathbb{R} \Big\}  \oplus \mathbb{R} \cdot \begin{psmallmatrix} 1 & 0 & 0 \\ 0 & 0 & 0 \\ 0 & 0 & -1 \end{psmallmatrix} \oplus \Big\{ \begin{psmallmatrix} 0 & 0 & 0 \\ b & 0 & 0 \\ 0 & -b^T \mathbf{S} & 0 \end{psmallmatrix}: \; b \in \Lambda \otimes \mathbb{R} \Big\}.$$ The first summand exponentiates to rotations; the second to translations; the third to scalings; and the fourth to matrices of the form $$T_b^* = \begin{psmallmatrix} 1 & 0 & 0 \\ b & I & 0 \\ -Q(b) & -b^T \mathbf{S} & 1 \end{psmallmatrix}$$ which one obtains through the identity $T_{-R_v b}^* = J_v T_b J_v$ where $R_v = I - vv^T \mathbf{S}$ is the reflection through any vector $v \in \Lambda \otimes\mathbb{R}$ with $Q(v) = 1$.
\end{proof}

\begin{ex} It is often helpful to consider the simplest example: a lattice $\Lambda$ generated by a single vector $v$ of norm $1$, i.e. with Gram matrix $\mathbf{S} = (2)$. In this case $\mathrm{SO}^+(\Lambda_1 \otimes \mathbb{R})$ is isomorphic to $\mathrm{PSL}_2(\mathbb{R})$ via the latter's adjoint representation on its Lie algebra. The upper half-space $\mathbb{H}_{\Lambda}$ is identified with the usual upper half-plane $\mathbb{H} = \{z = x+iy: \, y > 0\}$ in an obvious way. Through this identification the standard generators $T = \begin{psmallmatrix} 1 & 1 \\ 0 & 1 \end{psmallmatrix}$ and $S = \begin{psmallmatrix} 0 & -1 \\ 1 & 0 \end{psmallmatrix}$ of $\mathrm{PSL}_2(\mathbb{Z})$ correspond to the translation $T_v$ and the inversion $J_v$, respectively.
\end{ex}

\subsection{Modular forms} For a general signature $(2,\ell)$ lattice $\Lambda_1$, we take the tautological bundle $\pi : E \rightarrow D^+$ where $E = \{z \in \Lambda_1 \otimes \mathbb{C}: \; \mathrm{span}(z) \in D^+\}$ and define \emph{automorphic forms} to be meromorphic functions $F : E \rightarrow \mathbb{C}$ which are homogeneous (i.e. $F(tz) = t^{-k} F(z)$ for some $k \in \mathbb{Z}$, called the weight of $F$) and which are invariant under the modular group: $F(Mz) = F(z)$ for all $M$ in the discriminant kernel of $\Lambda_1$.

In the case $\Lambda_1 = \Lambda \oplus \mathrm{II}_{1,1}$ it is enough to define automorphic forms as functions on the open dense subset $\mathbb{H}_{\Lambda} \subseteq D^+$. On functions $F : \mathbb{H}_{\Lambda} \rightarrow \mathbb{C}$ we define the slash operator $$F \Big|_k M(z) = j(M;z)^{-k} F(M \cdot z), \; \; M \in G, \; k \in \mathbb{Z}.$$ Then the definition of an automorphic form of weight $k$ reduces to a meromorphic function $F$ satisfying the functional equations $F|_k M = F$ for all $M \in \Gamma_{\Lambda}$. We call $F$ a \emph{modular form} if it is holomorphic on $\mathbb{H}_{\Lambda}$ and has bounded growth at cusps in the sense that the limit $\lim_{t \rightarrow \infty} (F|_k M)(itv)$ is bounded for all $M \in \mathrm{SO}^+(\Lambda_1 \otimes \mathbb{Q})$ and all positive vectors $v \in P$. Note that the growth conditions are automatically satisfied if $\ell \ge 3$, or if $\ell = 2$ and $\Lambda$ is anisotropic (Koecher's principle). Moreover $F$ is a \emph{cusp form} if those limits are zero.

Orthogonal modular forms are invariant under translations by $\Lambda$ and therefore have Fourier expansions: $$F(z) = \sum_{\lambda \in \Lambda'} c(\lambda) \mathbf{q}^{\lambda}, \; \; \mathbf{q}^{\lambda} = e^{2\pi i \langle \lambda, z \rangle}, \; c(\lambda) \in \mathbb{C}.$$ The growth condition implies that $c(\lambda) = 0$ unless $\lambda \in \overline{P}$ (the closure of the positive cone). Cusp forms have their Fourier coefficients supported on the positive cone itself. \\

\subsection{The Laplacian} With respect to the Gram matrix $\mathbf{S}$ which was fixed above, write $\mathbf{S}^{-1} = (s^{ij})_{i,j=1}^n$ and define the \emph{Laplace operator} on $\mathbb{H}_{\Lambda}$ by $$\Delta = \frac{1}{2} \sum_{i,j} s^{ij} \frac{\partial^2}{\partial z_i \partial z_j}.$$ (This is an abuse of nomenclature as $\mathbf{S}$ is not positive-definite.) Additionally define the \emph{Euler operator} by $$\mathcal{E} = z^T \nabla = \sum_i z_i \frac{\partial}{\partial z_i}.$$ In the lemma below we collect some basic properties.
\begin{lem} (i) $\Delta \mathbf{q}^{\lambda} = (2\pi i)^2 Q(\lambda) \mathbf{q}^{\lambda}$ for any $\lambda \in \Lambda'$. \\
(ii) $\Delta Q(z)^{-k} = k (1 + k - \ell/2) Q(z)^{-k-1}$ for any $k \in \mathbb{R}$. \\
(iii) $\mathcal{E} \Delta = \Delta (\mathcal{E} - 2)$.
\end{lem}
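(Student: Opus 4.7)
My plan is to prove each of the three parts by a direct computation with the coordinate definitions, with part (iii) following most cleanly from a commutator identity.

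For part (i), I would start from the coordinate form $\langle \lambda, z\rangle = \lambda^T \mathbf{S} z$, so that $\partial_i \mathbf{q}^\lambda = 2\pi i\,(\mathbf{S}\lambda)_i\,\mathbf{q}^\lambda$ and consequently $\partial_i \partial_j \mathbf{q}^\lambda = (2\pi i)^2 (\mathbf{S}\lambda)_i (\mathbf{S}\lambda)_j\,\mathbf{q}^\lambda$. Plugging into $\Delta = \tfrac12 \sum s^{ij}\partial_i\partial_j$ contracts the matrix $(\mathbf{S}\lambda)(\mathbf{S}\lambda)^T$ with $\mathbf{S}^{-1}$, giving $(\mathbf{S}\lambda)^T \mathbf{S}^{-1}(\mathbf{S}\lambda) = \lambda^T \mathbf{S}\lambda = 2Q(\lambda)$, which is exactly what is needed.

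For part (ii), using $\partial_j Q(z) = (\mathbf{S}z)_j$, differentiating $Q(z)^{-k}$ twice yields
\[
\partial_i \partial_j Q(z)^{-k} = k(k+1)\,Q(z)^{-k-2}(\mathbf{S}z)_i (\mathbf{S}z)_j - k\,Q(z)^{-k-1}\mathbf{S}_{ij}.
\]
Contracting with $\tfrac12 s^{ij}$ gives two contributions: the first yields $\tfrac12 k(k+1) Q(z)^{-k-2}\cdot z^T \mathbf{S} z = k(k+1)\,Q(z)^{-k-1}$, while the second yields $-\tfrac12 k\,Q(z)^{-k-1}\cdot \mathrm{tr}(\mathbf{S}^{-1}\mathbf{S}) = -\tfrac{k\ell}{2} Q(z)^{-k-1}$. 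Combining gives the claimed factor $k(1 + k - \ell/2)$.

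For part (iii), the cleanest approach is to compute the commutator $[\mathcal{E}, \Delta]$. Since $[\mathcal{E},\partial_j] = -\partial_j$ (from $[z_i,\partial_j]=-\delta_{ij}$), the Leibniz rule for commutators gives $[\mathcal{E}, \partial_i \partial_j] = -2\partial_i\partial_j$, and hence $[\mathcal{E}, \Delta] = -2\Delta$, which rearranges to $\mathcal{E}\Delta = \Delta(\mathcal{E} - 2)$. None of these steps involves any real obstacle; the only thing to be a little careful about is tracking the factor of $1/2$ in the definitions of $Q$ and $\Delta$ together with the contraction conventions $s^{ij}\mathbf{S}_{ij} = \ell$, which are what produce the specific constants in (i) and (ii).
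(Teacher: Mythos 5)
Your proof is correct and follows essentially the same route as the paper: all three parts are direct coordinate computations with the definitions of $Q$, $\Delta$, and $\mathcal{E}$, the only cosmetic difference being that the paper writes (i) and (ii) using $\nabla\cdot$ notation and expands $\Delta\mathcal{E}$ directly in (iii), whereas you use index notation and package (iii) as the commutator identity $[\mathcal{E},\Delta]=-2\Delta$. The constants all check out ($z^T\mathbf{S}z = 2Q(z)$, $\mathrm{tr}(\mathbf{S}^{-1}\mathbf{S})=\ell$), so there is nothing to add.
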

\begin{proof} (i) This is because $\Delta \mathbf{q}^{\lambda} = (1/2) \nabla \cdot 2\pi i e^{2\pi i \lambda^T \mathbf{S}z } = (2\pi i)^2 Q(\lambda) e^{2\pi i \lambda^T \mathbf{S}z}.$ \\
(ii) This is because $$\Delta Q(z)^{-k} = \frac{1}{2} \nabla \cdot (-k Q(z)^{-k-1} z) = \frac{-k \ell}{2} Q(z)^{-k-1} + \frac{k(k+1)}{2} Q(z)^{-k-2} z^T \mathbf{S}z = k (1 + k - \ell/2) Q(z)^{-k-1}.$$
(iii) This is because \[ \Delta \mathcal{E} = \frac{1}{2} \sum_{i,j,k} s^{ij} \frac{\partial^2}{\partial z_i \partial z_j} \Big( z_k \frac{\partial}{\partial z_k} \Big)= \frac{1}{2} \sum_{i,j,k} s^{ij} \Big( \delta_{jk} \frac{\partial^2}{\partial z_i \partial z_k} + \delta_{ik} \frac{\partial^2}{\partial z_j \partial z_k} + z_k \frac{\partial^3}{\partial z_i \partial z_j \partial z_k} \Big) = 2 \Delta + \mathcal{E} \Delta. \qedhere \]
\end{proof}
Part (iii) is generalized by the fact that the map $X = \begin{psmallmatrix} 0 & 1 \\ 0 & 0 \end{psmallmatrix} \mapsto [\text{multiplication by} \, -Q(z)]$, $Y = \begin{psmallmatrix} 0 & 0 \\ 1 & 0 \end{psmallmatrix} \mapsto \Delta$ and $H = \begin{psmallmatrix} 1 & 0 \\ 0 & -1 \end{psmallmatrix} \mapsto \mathcal{E} + \ell/2$ determines a representation of $\mathfrak{sl}_2(\mathbb{Z})$, but we will not use this.

The following lemma says that applying $\Delta$ to a modular form $n$ times ``almost" raises its weight by $2n$.

\begin{lem} Let $F : \mathbb{H}_{\Lambda} \rightarrow \mathbb{C}$ be a holomorphic function and $k,n \in \mathbb{N}$. Then: \\
(i) $\Delta^n (F|_k T_b) = (\Delta^n F)|_{k+2n} T_b$; \\
(ii) $\Delta^n (F|_k R_A) = (\Delta^n F)|_{k+2n} R_A$; \\
(iii) $\Delta^n (F |_k S_t) = (\Delta^n F)|_{k+2n} S_t$; \\
(iv) $$\Delta^n (F|_k J_v) = \sum_{j=0}^n \frac{\Gamma(n+k+1-\ell/2)}{\Gamma(j+k+1-\ell/2)} \binom{n}{j} \Big( \Delta^j (\mathcal{E}+k)^{(n-j)} F \Big) \Big|_{k+n+j} J_v,$$ where $(\mathcal{E}+k)^{(n-j)}$ denotes the rising factorial $$(\mathcal{E} + k)^{(n-j)} = (\mathcal{E} + k)(\mathcal{E}+k+1)...(\mathcal{E}+k+n-j-1)$$ as a composition of operators.
\end{lem}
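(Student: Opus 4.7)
I will dispatch parts (i)--(iii) first. Since $j(T_b;z) = j(R_A;z) = 1$ and $j(S_t;z) = t^{-1}$, these slash operators act simply as $F \mapsto F(z+b)$, $F \mapsto F(Az)$, and $F \mapsto t^k F(tz)$ respectively. The chain rule then shows: $\Delta$ is translation-invariant, giving (i); the orthogonality $A^T\mathbf{S}A = \mathbf{S}$ is equivalent to $A \mathbf{S}^{-1} A^T = \mathbf{S}^{-1}$, so $\Delta(F \circ A) = (\Delta F) \circ A$, giving (ii); and $\Delta[t^k F(tz)] = t^{k+2}(\Delta F)(tz)$, so each application of $\Delta$ accounts for the weight shift $+2$, giving (iii) by iteration.

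For (iv) I will induct on $n$, after a preliminary simplification. Since $R_v = I - vv^T\mathbf{S}$ is $\mathbf{S}$-orthogonal, both $\Delta$ and $\mathcal{E}$ commute with precomposition by $R_v$ exactly as in (ii). Writing $\widetilde F = F \circ R_v$ and $\sigma(z) = z/Q(z)$ for the Kelvin-type involution, we have $F|_k J_v(z) = Q(z)^{-k}\widetilde F(\sigma z)$, so it suffices to prove the identity with $\sigma$ in place of $J_v$ and $\widetilde F$ in place of $F$; the reflection is then reabsorbed on the right at the end.

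The base case $n=1$ is the principal computation. I will expand $\Delta[Q^{-k}\widetilde F \circ \sigma]$ by Leibniz into three contributions. The first, $(\Delta Q^{-k})\widetilde F\circ\sigma$, equals $k(k+1-\ell/2) Q^{-k-1}\widetilde F\circ\sigma$ by the preceding lemma. The cross term $\sum s^{ij}\partial_i(Q^{-k})\partial_j(\widetilde F\circ\sigma)$ is evaluated using $\partial_j \sigma_l = \delta_{jl}/Q - z_l(\mathbf{S}z)_j/Q^2$ and the resulting identity $\mathcal{E}[\widetilde F\circ\sigma] = -(\mathcal{E}\widetilde F)\circ\sigma$, yielding $kQ^{-k-1}(\mathcal{E}\widetilde F)\circ\sigma$. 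The third contribution $Q^{-k}\Delta[\widetilde F\circ\sigma]$ requires a second application of the chain rule and produces six summands; the two involving $(\mathcal{E}^2\widetilde F)\circ\sigma$ cancel (coefficients $-\tfrac12,-\tfrac12,+1$), and the remainder is $Q^{-k-2}(\Delta\widetilde F)\circ\sigma + (1-\ell/2) Q^{-k-1}(\mathcal{E}\widetilde F)\circ\sigma$, where the factor $1-\ell/2$ comes from the trace $\frac{1}{2}\sum s^{ij}\mathbf{S}_{ij} = \ell/2$. Reassembling the three contributions gives
$$\Delta(F|_k J_v) = (k+1-\ell/2)[(\mathcal{E}+k)F]|_{k+1} J_v + (\Delta F)|_{k+2} J_v,$$
which is (iv) for $n=1$.

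For the induction step I will apply the base-case identity to each summand of the formula for $\Delta^n(F|_k J_v)$ with $G = \Delta^j(\mathcal{E}+k)^{(n-j)} F$ of weight $k+n+j$. Iterating part (iii) of the preceding lemma yields $\mathcal{E}\Delta^j = \Delta^j(\mathcal{E}-2j)$, hence
$$(\mathcal{E}+k+n+j)\Delta^j(\mathcal{E}+k)^{(n-j)} = \Delta^j(\mathcal{E}+k+n-j)(\mathcal{E}+k)^{(n-j)} = \Delta^j(\mathcal{E}+k)^{(n+1-j)},$$
so the base case reshapes each summand into the desired form. Collecting coefficients of $[\Delta^j(\mathcal{E}+k)^{(n+1-j)}F]|_{k+n+1+j}J_v$ in both the resulting expression and the claimed formula at level $n+1$ and writing $a = k+1-\ell/2$, the matching of coefficients reduces to the elementary Pascal-type identity $j\binom{n}{j} = (n+1-j)\binom{n}{j-1}$, closing the induction. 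The only delicate step is the base-case chain-rule computation, where the cancellation of the $(\mathcal{E}^2\widetilde F)\circ\sigma$ contributions and the emergence of the factor $(1-\ell/2)$ must be tracked by hand.
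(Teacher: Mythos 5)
Your proposal is correct and follows essentially the same route as the paper: (i)--(iii) by invariance of $\Delta$ under translations, $\mathbf{S}$-orthogonal maps and scalings, and (iv) by the Leibniz/chain-rule computation of $\Delta\bigl[Q(z)^{-k}F(z/Q(z))\bigr]$ for the base case $n=1$, followed by induction using $\mathcal{E}\Delta^{j}=\Delta^{j}(\mathcal{E}-2j)$ and a verification that the resulting coefficient recurrence is solved by $\frac{\Gamma(n+k+1-\ell/2)}{\Gamma(j+k+1-\ell/2)}\binom{n}{j}$. The only (cosmetic) differences are that you absorb the reflection $R_v$ at the outset rather than at the end, and your reduction of the coefficient check to the identity $j\binom{n}{j}=(n+1-j)\binom{n}{j-1}$ is in fact a cleaner verification than the paper's appeal to Lah numbers.
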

Here $\frac{\Gamma(n+k+1-\ell/2)}{\Gamma(j+k+1-\ell/2)} = (j+k+1-\ell/2)...(n+k-\ell/2)$ is well-defined even if $n+k+1-\ell/2 \in -\mathbb{N}_0$.

\begin{rem} (i) The terms $\frac{\Gamma(n+k+1-\ell/2)}{\Gamma(j+k+1-\ell/2)} \binom{n}{j}$ appear in many contexts involving differentiation and M\"obius transformations; compare the proof of Proposition 19 in \cite{Z}. \\
(ii) The terms $\frac{\Gamma(n+k+1-\ell/2)}{\Gamma(j+k+1-\ell/2)} \binom{n}{j} (\mathcal{E} + k) F \Big|_{k+1} J_v$ corresponding to indices $j < n$ in $\Delta (F|_k J_v)$ may be thought of as obstructions to modularity of $\Delta^n F$, if $F$ was a moduar form of weight $k$. These obstructions vanish precisely in weight $k = \ell/2 - n$. In this weight all modular forms $F$ are \emph{singular}, that is, their Fourier expansions are supported on norm-zero vectors, or equivalently $\Delta^n F = 0$. If one takes $F$ to be a meromorphic modular form of this weight then the modularity of $\Delta^n F$ is a form of Bol's identity. 
\end{rem}

\begin{proof} Parts (i) through (iii) are easy because $\Delta$ preserves translations, scalings and the orthogonal group of $\mathbf{S}$. To prove (iv) we simply use the chain rule and induction on $n$. When $n=1$ this reduces to proving $$\Delta (F|_k J_v) = (1 + k - \ell/2) \Big( (\mathcal{E} + k) F \Big) \Big|_{k+1} J_v + (\Delta F) \Big|_{k+2} J_v.$$ We first take $z$ to have imaginary part in the negative cone (so $F(z/Q(z))$ is well-defined) and compute $$\nabla \Big[ z \mapsto F(z/Q(z)) \Big] = Q(z)^{-1} (I - Q(z)^{-1} \mathbf{S}zz^T) (\nabla F)(z / Q(z))$$ and \begin{align*} \Delta \Big[ z \mapsto F(z/Q(z)) \Big] &= \frac{1}{2} \nabla \cdot Q(z)^{-1} (\mathbf{S}^{-1} - Q(z)^{-1} zz^T) (\nabla F)(z / Q(z)) \\ &= (1 - \ell/2) Q(z)^{-2} z^T (\nabla F)(z/Q(z)) + Q(z)^{-2} (\Delta F)(z / Q(z)). \end{align*} Using the product rule for $\Delta$ we obtain \begin{align*} &\quad \Delta \Big[ z \mapsto Q(z)^{-1} F(z / Q(z)) \Big] \\ &= \Delta \Big[Q(z)^{-k}\Big] F(z / Q(z)) + Q(z)^{-k} \Delta \Big[ z \mapsto F(z/Q(z)) \Big] \\ &\quad \quad + \langle \mathbf{S}^{-1} \nabla Q(z)^{-k}, \mathbf{S}^{-1} \nabla[z \mapsto F(z/Q(z))] \rangle \\ &= k (1+k-\ell/2) Q(z)^{-k-1} + (1 - \ell/2) Q(z)^{-k-2} z^T (\nabla F)(z/Q(z)) \\ &\quad \quad + Q(z)^{-k-2} (\Delta F)(z / Q(z)) - k Q(z)^{-k-2} z^T (I - \mathbf{S}zz^T / Q(z)) (\nabla F)(z / Q(z)) \\ &= k (1+k-\ell/2) Q(z)^{-k-1} + (1+ k - \ell/2) (\mathcal{E} F)(z / Q(z)) + Q(z)^{-k-2} (\Delta F)(z / Q(z)). \end{align*} Now replace $z$ by its reflection $z - \langle v,z \rangle v$ (which is an orthogonal reflection with respect to $\mathbf{S}$ and therefore leaves $\Delta$ invariant) to obtain, for $z \in \mathbb{H}_{\Lambda}$, $$\Delta(F |_k J_v) = k (1 + k - \ell/2) F|_{k+1} J_v + (1 + k - \ell/2) (\mathcal{E} F)|_{k+1} J_v + (\Delta F) |_{k+2} J_v.$$

In general, suppose we have found an identity of the form $$\Delta^n (F|_k J_v) = \sum_{j=0}^n c(n,j,k) \Big( \Delta^j (\mathcal{E} + k)^{(n-j)} F \Big) \Big|_{k+n+j} J_v$$ for some constants $c(n,j,k)$. Using the paragraph above and the relation $\mathcal{E} \Delta = \Delta (\mathcal{E} - 2)$ we find \begin{align*} \Delta^{n+1} (F|_k J_v) &= \sum_{j=0}^n c(n,j,k) (1 + k + n + j - \ell/2) \Big( (\mathcal{E} + k + n + j) \Delta^j (\mathcal{E} + k)^{(n-j)} F \Big) \Big|_{k+n+1+j} J_v \\ &\quad \quad + \sum_{j=0}^n c(n,j,k) \Big( \Delta^{j+1} (\mathcal{E} + k)^{(n-j)} F \Big) \Big|_{k+n+2+j} J_v \\ &= \sum_{j=0}^{n+1} \Big( (1 + k + n + j - \ell/2) c(n,j,k) + c(n,j-1,k) \Big) \Big( \Delta^j (\mathcal{E} + k)^{(n+1-j)} F \Big) \Big|_{k+n+1+j} J_v, \end{align*} so we find an identity of this form for $\Delta^{n+1}(F|_k J_v)$ with coefficients determined by the recurrence $$c(n+1,j,k) = (1 + k + n + j - \ell/2) c(n,j,k) + c(n,j-1,k), \; \; c(0,j,k) = \begin{cases} 1: & j = 0; \\ 0: & \text{otherwise}. \end{cases}$$ This recurrence is solved by $\frac{\Gamma(n+1+k-\ell/2)}{\Gamma(j+1+k-\ell/2)} \binom{n}{j}$ as one can verify (or reduce to the case $1+k-\ell/2=0$ and observe that the recurrence defines the unsigned Lah numbers which have closed form $\frac{n!}{j!} \binom{n-1}{j-1}$), so we obtain the claim.
\end{proof}

\begin{ex} In the case of elliptic modular forms of level $1$, we take $\ell = 1$ and $\Delta = \frac{1}{4}\frac{d^2}{d \tau^2}$ and $\mathcal{E} = \tau \frac{d}{d \tau}$ and this formula takes the form $$\frac{d^{2n}}{d\tau^{2n}} \Big( \tau^{-2k} f(-1/\tau) \Big) = \sum_{j=0}^n  \frac{2^{n-j} \tau^{-2k-2n-2j} (2n+2k-1)!! n!}{(2j+2k-1)!! j! (n-j)!} \Big( \frac{d^{2j}}{d\tau^{2j}} \Big( \tau \frac{d}{d\tau} + k \Big) ... \Big( \tau \frac{d}{d\tau} + k+n-j-1 \Big) f \Big)(-1/\tau).$$ Here $(2n-1)!! = (2n-1)(2n-3)...1$ and $(-1)!! = 1$.
\end{ex}

\section{Pullbacks to Heegner divisors}

Let $(\Lambda,Q)$ be an even lattice of signature $(1,\ell-1)$ and let $\lambda \in \Lambda$ be a lattice vector of negative norm $Q(\lambda) = -m$. Let $L$ be the orthogonal complement $\lambda^{\perp}$ in $\Lambda$. Then $L \oplus \mathbb{Z}\lambda \subseteq \Lambda$ is a sublattice of full rank and we write $\mathbb{H}_{\Lambda} = \mathbb{H}_{L \oplus \mathbb{Z}\lambda}$ in coordinates $$\mathbb{H}_{\Lambda} = \{z + w\lambda : \; z = x+iy \in \mathbb{H}_L, \, w = u+iv \in \mathbb{C}, \, v^2 < Q_L(y) / m\}.$$

To any $M \in \mathrm{SO}^+((L \oplus \mathrm{II}_{1,1}) \otimes \mathbb{R})$ let $\tilde M \in \mathrm{SO}^+((\Lambda \oplus \mathrm{II}_{1,1}) \otimes \mathbb{R})$ be the orthogonal matrix which restricts to $M$ on $L \oplus \mathrm{II}_{1,1}$ and which leaves $\lambda$ fixed. If $M$ comes from the modular group $\Gamma_L$ then $\tilde M$ acts trivially on the discriminant group $(L \oplus \mathbb{Z} \lambda)'/(L \oplus \mathbb{Z}\lambda)$. Since $\Lambda, \Lambda' \subseteq (L \oplus \mathbb{Z}\lambda)'$ it follows that $\tilde M$ maps $\Lambda$ into itself and acts trivially on $\Lambda'/\Lambda$, i.e. $\tilde M \in \Gamma_{\Lambda}$. In this way the embedding $$\mathbb{H}_L \longrightarrow \mathbb{H}_{\Lambda}, \; \; z \mapsto z + 0 \cdot \lambda$$ identifies $X_L = \overline{\Gamma_L \backslash \mathbb{H}_L}$ with an analytic divisor on $X_{\Lambda} = \overline{\Gamma_{\Lambda} \backslash \mathbb{H}_{\Lambda}}$. Linear combinations of divisors that arise in this way are called \emph{Heegner divisors}.

\begin{ex} Let $\Lambda = \mathrm{II}_{1,1}$ be a hyperbolic plane. One can identify $\Lambda \oplus \mathrm{II}_{1,1}$ with the lattice of integral $(2 \times 2)$ matrices with quadratic form given by the determinant. The group $\Gamma_{\Lambda}$ is generated by the transpose and by $\mathrm{SL}_2(\mathbb{Z}) \times \mathrm{SL}_2(\mathbb{Z}) / \{\pm (I,I)\}$ where $(M,N) \in \mathrm{SL}_2(\mathbb{Z}) \times \mathrm{SL}_2(\mathbb{Z})$ acts by left-multiplication by $M^T$ and by right-multiplication by $N$, and $X_{\Lambda}$ is the product of modular curves $X(1) \times X(1)$ modulo $(\tau_1,\tau_2) \sim (\tau_2,\tau_1)$. If $p$ is a prime and $\lambda = (-p,1)$ then $L = \mathbb{Z}(1,p)$ is a one-dimensional lattice generated by a vector of norm $p$; and $X_L$ is the curve $X_0(p)$ modulo the Fricke involution $\tau \mapsto -1/p\tau$; and the embedding $X_L \rightarrow X_{\Lambda}$ above is essentially the $p^{\text{th}}$ Hecke correspondence.
\end{ex}

Interpret $\mathbb{H}_{\Lambda}$ as a subset of $\mathbb{H}_L \times \mathbb{C}$ with $z+w \lambda$ corresponding to the pair $(z,w)$. We will define pullback operators from modular forms on $\mathbb{H}_{\Lambda}$ to modular forms on $\mathbb{H}_L$ by evaluating certain Gegenbauer polynomials in the partial derivative $\partial_w$ and the Laplace operator $\Delta_L$ on $\mathbb{H}_L$ along the divisor $w=0$. \\

The main point is to understand how the partial derivative $\partial_w$ behaves with respect to inversions of $\mathbb{H}_L$. Let $\mathcal{E}_z$ denote the Euler operator in the variable $z$, i.e. $\mathcal{E}_z = \sum_{i=1}^{\ell - 1} z_i \partial_{z_i}$ and let $(\mathcal{E}_z + k)^{(n)}$ denote the rising factorials as before.

\begin{lem} Let $F : \mathbb{H}_{\Lambda} \rightarrow \mathbb{C}$ be a holomorphic function and let $v \in L \otimes \mathbb{R}$ with $Q(v) = 1$. For any $n \in \mathbb{N}_0$, $$\partial_w^n \Big|_{w=0}  \Big( F \Big|_k J_v \Big) = \sum_{i=0}^{\lfloor n/2 \rfloor} \frac{n!}{i! (n-2i)!} m^i \Big( (\mathcal{E}_z + k + n - 2i)^{(i)} \partial_w^{n-2i} \Big|_{w=0} F \Big) \Big|_{k+n-i} J_v.$$
\end{lem}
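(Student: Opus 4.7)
The strategy is to Taylor-expand $F$ in the normal direction $\lambda$, apply $J_v$ (which fixes $\lambda$ pointwise), and then read off the coefficient of $w^n$.

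First I would unravel the action of $J_v$ in coordinates $(z,w)$. Since $v \in L$ is orthogonal to $\lambda$ and $Q(\lambda) = -m$, we have $\langle v, z + w\lambda\rangle = \langle v, z\rangle$ and $Q(z+w\lambda) = Q(z) - mw^2 =: r$, so $J_v(z+w\lambda) = \xi/r + (w/r)\lambda$ with $\xi := z - \langle v,z\rangle v$ and $j(J_v;\, z+w\lambda) = r$. Hence
$$(F|_k J_v)(z + w\lambda) = r^{-k}\, F\Big(\frac{\xi}{r} + \frac{w}{r}\lambda\Big).$$
As in the proof of Lemma 2.5, this is interpreted initially for $z$ with imaginary part in the negative cone and then extended by reflecting through $v$.

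Writing $f_\alpha := \partial_w^\alpha|_{w=0} F$ for the normal derivatives, Taylor expansion in the $\lambda$-direction gives $F(a + b\lambda) = \sum_\alpha (b^\alpha/\alpha!) f_\alpha(a)$, so
$$(F|_k J_v)(z + w\lambda) = \sum_{\alpha \geq 0} \frac{w^\alpha}{\alpha!}\, r^{-k-\alpha}\, f_\alpha(\xi/r).$$
Setting $s := mw^2/Q(z)$ and $y := \xi/Q(z) = J_v\cdot z$, we have $r = Q(z)(1-s)$ and $\xi/r = y/(1-s)$. The crux is the operator identity
$$(1-s)^{-A}\, g\!\left(\frac{y}{1-s}\right) = \sum_{i \geq 0} \frac{s^i}{i!}\, [(\mathcal{E} + A)^{(i)} g](y), \qquad A \in \mathbb{C},$$
valid for any holomorphic $g$, with $\mathcal{E} = \mathcal{E}_y$ the Euler operator in the argument of $g$. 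I would verify this by testing on monomials $g(y) = y^\beta$: both sides equal $(1-s)^{-A-|\beta|} y^\beta$, using $\mathcal{E}\, y^\beta = |\beta|\, y^\beta$ together with the binomial series $(1-s)^{-B} = \sum_i (B^{(i)}/i!)\, s^i$ for numerical $B$.

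Applying this identity with $A = k+\alpha$ and $g = f_\alpha$, and substituting $s^i = m^i w^{2i} Q(z)^{-i}$, yields
$$(F|_k J_v)(z + w\lambda) = \sum_{\alpha,\, i \geq 0} \frac{w^{\alpha + 2i}\, m^i}{\alpha!\; i!}\; Q(z)^{-k-\alpha-i}\; [(\mathcal{E} + k + \alpha)^{(i)} f_\alpha](y).$$
Extracting the coefficient of $w^n/n!$ forces $\alpha = n - 2i$, and recognizing $Q(z)^{-(k+n-i)} H(y) = (H|_{k+n-i} J_v)(z)$ for any $H$ recovers the asserted formula. The main obstacle is the operator identity above, which neatly combines the two separate $(1-s)$-expansions — coming from $r^{-k-\alpha}$ and from the rescaling in the argument of $f_\alpha$ — into a single rising factorial of $\mathcal{E}+k+n-2i$; once this is in hand the rest is bookkeeping.
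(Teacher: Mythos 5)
Your proof is correct and follows essentially the same route as the paper: both arguments expand $(F|_kJ_v)(z+w\lambda)$ as a power series in $w$, apply the binomial series to $(Q(z)-mw^2)^{-(\text{exponent})}$, and identify the resulting binomial coefficients with the rising factorial $(\mathcal{E}_z+k+n-2i)^{(i)}$ via the eigenvalue action of the Euler operator on monomials. The only difference is cosmetic: you package the $z$-dependence into the clean identity $(1-s)^{-A}g(y/(1-s))=\sum_i \frac{s^i}{i!}(\mathcal{E}+A)^{(i)}g(y)$ rather than carrying out the full double Taylor expansion in $z$ and $w$ as the paper does.
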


For example, when $n=0,1,2$ this reduces to $$(F |_k J_v) \Big|_{w=0} = \Big( F\Big|_{w=0} \Big) \Big|_k J_v; \; \; \partial_w \Big|_{w=0} \Big( F \Big|_k J_v \Big) = \Big( \partial_w \Big|_{w=0} F \Big) \Big|_{k+1} J_v;$$ $$\partial_w^2 \Big|_{w=0} \Big( F \Big|_k J_v \Big) = 2m \Big( (\mathcal{E}_z + k ) F \Big) \Big|_{k+1} J_v + \Big( \partial_w^2 \Big|_{w=0} F \Big) \Big|_{k+2} J_v.$$ In fact, evaluating a modular form for $\Lambda$ of weight $k$ at $w=0$ yields a modular form for $L$ of weight $k$; and evaluating its partial derivative $\partial_w F$ at $w=0$ yields a modular form for $L$ of weight $k+1$. \\

In the proof we use some standard notation for multivariate power series. If $z = (z_1,...,z_n) \in \mathbb{C}^n$ and $\alpha = (\alpha_1,...,\alpha_n) \in (\mathbb{N}_0)^n$ then define $$\alpha! = \alpha_1! \cdot ... \cdot \alpha_n!; \; |\alpha| = \alpha_1+...+\alpha_n; \; z^{\alpha} = z_1^{\alpha_1} \cdot ... \cdot z_n^{\alpha_n}.$$
\begin{proof} Both sides of the claim are formally invariant under translation by arbitrary vectors in $\Lambda \otimes \mathbb{C}$ so it is enough to prove this for functions which are analytic in a neighborhood of $(z,w) = (0,0)$ in $\Lambda \otimes \mathbb{C}$. Expand $F$ as a Taylor series: $$F(z,w) = \sum_{\alpha \in \mathbb{N}^{\ell - 1}} \sum_{j=0}^{\infty} \frac{c(\alpha,j)}{\alpha! j!} z^{\alpha} w^j.$$ Let $R_v z = z - \langle v,z \rangle v$ denote the reflection along $v$. Using the binomial theorem for $w$ and $\|z\|$ sufficiently small we find \begin{align*} F \Big|_k J_v &= \sum_{\alpha} \sum_{j=0}^{\infty} \frac{c(\alpha,j)}{\alpha! j!} (R_v z)^{\alpha} w^j (Q(z) - mw^2)^{-|\alpha|-j-k} \\ &= \sum_{\alpha} \sum_{i,j=0}^{\infty} \binom{|\alpha|+j+k+i-1}{i} m^i Q(z)^{-|\alpha|-j-k-i} \frac{c(\alpha,j)}{\alpha! j!} (R_v z)^{\alpha} w^{j+2i}. \end{align*} We apply $\partial_w^n$ and set $w=0$ to find $$\partial_w^n \Big|_{w=0} (F|_k J_v) = \sum_{i=0}^{\lfloor n/2 \rfloor} \frac{n!}{(n-2i)!} m^i \sum_{\alpha} \binom{|\alpha|+n+k-i-1}{i} \frac{c(\alpha,n-2i)}{\alpha!} Q(z)^{-n-k+i} (J_v \cdot z)^{\alpha}.$$ The claim follows by observing that $\mathcal{E}_z$ acts on power series by $\mathcal{E}_z z^{\alpha} = |\alpha| z^{\alpha}$, and therefore \[\frac{1}{i!} (\mathcal{E}_z + k + n - 2i)^{(i)} z^{\alpha} = \binom{|\alpha|+n+k-i-1}{i} z^{\alpha}. \qedhere \]
\end{proof}

\begin{cor} Let $F : \mathbb{H}_{\Lambda} \rightarrow \mathbb{C}$ be holomorphic and let $v \in L$ with $Q(v) = 1$. For $n_1,n_2 \in \mathbb{N}_0$,

\begin{align*} \Delta_L^{n_1} \partial_w^{n_2} \Big|_{w=0} \Big( F \Big|_k J_v \Big) &= \sum_{i=0}^{\lfloor n_2/2 \rfloor} \sum_{j=0}^{n_1} \frac{m^i n_1! n_2! \Gamma(k+n_1+n_2 + \frac{3-\ell}{2} - i)}{i!j!(n_1-j)!(n_2-2i)! \Gamma(j+k+n_2 + \frac{3-\ell}{2} - i)} \\ &\quad\quad \times \Big( \Delta_L^j (\mathcal{E}+k+n_2 - 2i)^{(n_1 + i - j)} \partial_w^{n_2 - 2i} \Big|_{w=0} F \Big) \Big|_{k+n_1+n_2+j-i} J_v. \end{align*}
\end{cor}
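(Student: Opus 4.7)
The plan is to iterate the two preceding lemmas. Lemma 3.2 handles how $\partial_w^{n_2}|_{w=0}$ interacts with the slash operator $|_k J_v$, and Lemma 2.4(iv) handles how a pure Laplacian $\Delta_L^{n_1}$ interacts with a slash operator $|_{k'} J_v$. The corollary is precisely the composition of these two computations, applied in the natural order: first reduce $\partial_w^{n_2}|_{w=0}(F|_k J_v)$ to a sum of expressions of the form $H_i|_{k+n_2-i} J_v$ with $H_i$ a holomorphic function on $\mathbb{H}_L$, and then push $\Delta_L^{n_1}$ past the remaining $J_v$ using the version of Lemma 2.4(iv) adapted to the sublattice $L$ (which has signature $(1,\ell-2)$, so the formula in Lemma 2.4 is applied with $\ell$ replaced by $\ell - 1$).

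Concretely, the first step yields
\[
\partial_w^{n_2}\Big|_{w=0}\!\bigl(F\big|_k J_v\bigr) = \sum_{i=0}^{\lfloor n_2/2\rfloor} \frac{n_2!\, m^i}{i!(n_2-2i)!}\, H_i\Big|_{k+n_2-i} J_v, \qquad H_i := (\mathcal{E}_z + k + n_2 - 2i)^{(i)} \partial_w^{n_2-2i}\Big|_{w=0} F,
\]
and $\partial_w$ commutes with $\Delta_L$ since they act in disjoint variables. Applying Lemma 2.4(iv) on the $(1,\ell-2)$-lattice $L$ to $H_i|_{k+n_2-i}J_v$ produces $\sum_j \frac{\Gamma(n_1+k+n_2+\frac{3-\ell}{2}-i)}{\Gamma(j+k+n_2+\frac{3-\ell}{2}-i)}\binom{n_1}{j}\bigl(\Delta_L^j (\mathcal{E}_z+k+n_2-i)^{(n_1-j)} H_i\bigr)|_{k+n_1+n_2+j-i} J_v$. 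Substituting the definition of $H_i$ gives the stated formula once the two rising factorials in the middle are combined.

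The main step beyond routine bookkeeping is that combinatorial identity. One has
\[
(\mathcal{E}_z + k + n_2 - i)^{(n_1 - j)}\, (\mathcal{E}_z + k + n_2 - 2i)^{(i)} = (\mathcal{E}_z + k + n_2 - 2i)^{(n_1 + i - j)},
\]
because the two blocks of consecutive shifts fit together end-to-end: the first factor begins at $\mathcal{E}_z + k + n_2 - 2i$ and runs through $\mathcal{E}_z+k+n_2-i-1$, after which the second factor continues from $\mathcal{E}_z + k + n_2 - i$ through $\mathcal{E}_z + k + n_2 - i + (n_1-j) - 1$. This telescoping of the rising factorial, together with the elementary identity $\binom{n_1}{j} \cdot \frac{n_2!\, m^i}{i!(n_2-2i)!} = \frac{m^i n_1! n_2!}{i!\,j!\,(n_1-j)!\,(n_2-2i)!}$, converts the intermediate expression into the one asserted in the corollary. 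The expected obstacle is purely notational: keeping the shifts $k, k+n_2-2i, k+n_2-i$ and the dimension shift $\ell \mapsto \ell - 1$ correctly aligned when invoking the two previous lemmas. Once the rising factorials are combined as above, the coefficient matches and the proof is complete.
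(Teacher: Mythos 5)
Your proposal is correct and is exactly what the paper intends by its one-line proof (``Combine Lemma 2.4 and Lemma 3.2''): apply Lemma 3.2 first, then Lemma 2.4(iv) on $\mathbb{H}_L$ with $\ell$ replaced by $\ell-1$, and merge the two rising factorials. The telescoping identity $(\mathcal{E}+k+n_2-i)^{(n_1-j)}(\mathcal{E}+k+n_2-2i)^{(i)} = (\mathcal{E}+k+n_2-2i)^{(n_1+i-j)}$ and the coefficient bookkeeping are both right.
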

\begin{proof} Combine Lemma 2.4 and Lemma 3.2.
\end{proof}

\begin{defn} For $s \in \mathbb{C}$ and $N \in \mathbb{N}_0$, define the homogeneized \emph{Gegenbauer polynomial} $g_N^s(x,y)$ in the variables $x,y$ by $$g_N^s(x,y) =  \sum_{\substack{n_1,n_2 \in \mathbb{N}_0 \\ 2n_1 + n_2 = N}} (-1)^{n_1} \frac{\Gamma(s+n_1+n_2)}{\Gamma(s) n_1! n_2!} x^{n_2} y^{n_1},$$ with generating function $$\sum_{N=0}^{\infty} g_N^s(x,y) t^N = (1 - xt + yt^2)^{-s}.$$
\end{defn}

We will generally use the rescaling $$G_N^s(x,y) := \frac{N! \Gamma(s)}{\Gamma(s + \lceil N/2 \rceil)} g_N^s(x,y),$$ which has better integrality properties (and is compatible with the Gegenbauer polynomials as used by Eichler and Zagier \cite{EZ}): namely $G_N^s(x,y) \in \mathbb{Z}[x,y,s]$.

\begin{thm}  For holomorphic $F : \mathbb{H}_{\Lambda} \rightarrow \mathbb{C}$ define the \emph{$N^{th}$ pullback} to $\mathbb{H}_L$ by $$P_{N;k}^L F(z;\lambda) = (2\pi i)^{-N} G_N^{k + (1-\ell)/2}(\partial_w, m \Delta_L) \Big|_{w=0} F(z).$$ For any $v \in L \otimes \mathbb{R}$ with $Q(v) = 1$, $$P_N^L \Big( F \Big|_k J_v \Big) = \Big( P_N^L F \Big) \Big|_{k+N} J_v.$$
\end{thm}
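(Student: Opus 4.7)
My plan is to prove the identity by directly expanding both sides and then exhibiting a finite-difference cancellation that collapses the LHS onto the RHS. Throughout I would abbreviate the building-block operators as $T_{p,q,r}(F) := \Delta_L^{p}(\mathcal{E}_z+k+q)^{(r)}\partial_w^{q}|_{w=0} F$ for $p,q,r\in\mathbb{N}_0$, noting that $T_{p,q,0}=\Delta_L^{p}\partial_w^{q}|_{w=0}$ because the rising factorial is an empty product.

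First I would substitute the explicit series for $G_N^s$ (with $s=k+(1-\ell)/2$) into $P_N^L(F|_k J_v)$ and apply Corollary 3.3 to each term $\Delta_L^{n_1}\partial_w^{n_2}|_{w=0}(F|_k J_v)$. This yields a quadruple sum over $(n_1,n_2,i,j)$, with $2n_1+n_2=N$, whose generic summand is a scalar multiple of $T_{j,\,n_2-2i,\,n_1+i-j}(F)\big|_{k+n_1+n_2+j-i}J_v$.

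Next I would reindex by $p=j$, $q=n_2-2i$, $r=n_1+i-j$ (the natural indices attached to the operator $T_{p,q,r}$), keeping $i$ as the internal variable with $0\le i\le r$. The crucial observation, which uses $k+(3-\ell)/2=s+1$ in the Gamma factors from Corollary 3.3, is that the slash weight $k+n_1+n_2+j-i$ simplifies to $k+N-r$, independent of $i$. After gathering common factors the coefficient of $T_{p,q,r}(F)\big|_{k+N-r}J_v$ reduces to a constant in $i$ times
\[ \sum_{i=0}^{r}(-1)^{i}\binom{r}{i}\,\frac{\Gamma(s+r+p+q+i)}{\Gamma(s+1+p+q+i)}, \]
and the ratio of Gammas is the explicit polynomial $(s+1+p+q+i)(s+2+p+q+i)\cdots(s+r-1+p+q+i)$ of degree $r-1$ in $i$.

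Since the $r$-th forward difference of any polynomial of degree strictly less than $r$ vanishes at $0$, this inner sum is zero whenever $r\geq 1$, so only the $r=0$ contributions survive. For $r=0$ the operator is $T_{p,q,0}(F)=\Delta_L^{p}\partial_w^{q}|_{w=0}F$, appearing at the uniform weight $k+N$, and the surviving coefficient telescopes to $\tfrac{N!(-1)^{p}m^{p}\Gamma(s+p+q)}{p!\,q!\,\Gamma(s+\lceil N/2\rceil)}$, which is exactly the coefficient of the same operator in the Gegenbauer expansion of $P_N^L F$. Summing over $(p,q)$ with $2p+q=N$ then identifies the LHS with $(P_N^L F)|_{k+N}J_v$. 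The main obstacle will be the combinatorial bookkeeping in the reindexing and the verification that the Gamma factors combine as claimed; the observation that the relevant polynomial has degree $r-1$ (one less than the number of summands), which is what forces the value $s=k+(1-\ell)/2$ in the definition of $G_N^s$, is the essential mechanism powering the cancellation.
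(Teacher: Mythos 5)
Your proposal follows essentially the same route as the paper's proof: expand $G_N^s$ termwise, apply Corollary 3.3, reindex so that the slash weight depends only on $r=n_1+i-j$, and annihilate every $r\ge 1$ contribution via the vanishing of $\sum_{i=0}^{r}(-1)^i\binom{r}{i}\,\Gamma(s+p+q+r+i)/\Gamma(s+1+p+q+i)$ --- the paper establishes this by evaluating $\frac{d^{r-1}}{dt^{r-1}}\big|_{t=1}t^{s-1}(1-t)^r=0$, which is equivalent to your finite-difference argument on a polynomial of degree $r-1$. The surviving $r=0$ coefficients you compute agree with the paper's, so the proposal is correct and essentially identical in structure.
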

Here we are interpreting $J_v$ as an inversion of either $\mathbb{H}_L$ or $\mathbb{H}_{\Lambda}$ as the context requires. The notation emphasizes the dependence on the vector $\lambda \in L^{\perp}$ which was chosen at the beginning of the section. We will usually suppress the weight $k$ if it is clear from the context.
\begin{proof}  Using Corollary 3.3 we compute, up to the constant multiple $C := (2\pi i)^{-N} \frac{N!}{\Gamma(k + (1 - \ell)/2 + \lceil N/2 \rceil)}$, \begin{align*} &\quad P_{N}^L (F |_k J_v)(\tau;\lambda) \\ &= C \sum_{2n_1 + n_2 = N} \sum_{j=0}^{n_1} \sum_{i=0}^{\lfloor n_2/2 \rfloor} \frac{(-1)^{n_1} m^{i+n_1} \Gamma(N - n_1 + k + \frac{1 - \ell}{2}) \Gamma(k+n_1+n_2 + \frac{3-\ell}{2} - i)}{ i! j! (n_1 - j)! (n_2 - 2i)! \Gamma(j+k+n_2 + \frac{3-\ell}{2} - i)} \\ &\quad\quad \times \Big( \Delta_L^j (\mathcal{E} + k + n_2 - 2i)^{(n_1 + i - j)} \partial_w^{n_2 - 2i} \Big|_{w=0} F \Big) \Big|_{k+n_1 + n_2 + j-i} J_v \\ &= C \sum_{u=0}^{\lfloor N/2 \rfloor} \frac{(-m)^{u} \Gamma(k+N-u + \frac{3-\ell}{2})}{(N - 2 u)!} \sum_{j=0}^{u} \frac{1}{j! (u - j)!} \sum_{i=0}^{u - j} (-1)^i \binom{u - j}{i} \frac{\Gamma(N+k + \frac{1 - \ell}{2}-u + i)}{\Gamma(N+k + \frac{3 - \ell}{2} -u + i - (u - j))} \\ &\quad \quad \times \Big(\Delta_L^j (\mathcal{E} + k + N - 2 u)^{(u - j)} \partial_w^{N - 2 u} \Big|_{w=0} F \Big) \Big|_{k + N + j - u} J_v, \end{align*} where we have labelled $u := n_1 + i$. The inner sum over $i$ simplifies as follows. For any $n > 0$ and $s \in \mathbb{C} \backslash \{0\}$, use the binomial theorem to see that $$\sum_{i=0}^{n} (-1)^i \binom{n}{i} \frac{\Gamma(i+s)}{\Gamma(i+s+1 -n)} = \frac{d^{n-1}}{dt^{n-1}} \Big|_{t=1} t^{s-1} (1 - t)^n = 0.$$ Of course when $n=0$ we obtain $\sum_{i=0}^n (-1)^i \binom{n}{i} \frac{\Gamma(i+s)}{\Gamma(i+s+1-n)} = 1/s$. Setting $n = u - j$ and $s = N+k+\frac{1-\ell}{2}-u$, we find \begin{align*} P_N^L(F|_k J_v) &= C \sum_{u = 0}^{\lfloor N/2 \rfloor} \frac{(-m)^{u} \Gamma(N+k - u + \frac{3 - \ell}{2})}{(N+k-u + \frac{1-\ell}{2}) u! (N - 2u)!} \Big( \Delta_L^{u} \partial_w^{n_2 - 2 u} \Big|_{w=0} F \Big) \Big|_{k+N} J_v \\ &= (P_N^L F)(\tau;\lambda) \Big|_{k+N} J_v. \qedhere \end{align*}
\end{proof}

\begin{cor} Let $F : \mathbb{H}_{\Lambda} \rightarrow \mathbb{C}$ be an orthogonal modular form of weight $k$. Then $P_N^L F$ is an orthogonal modular form of weight $k+N$ on $\mathbb{H}_L$ and it is a cusp form if $N \ge 1$. If $F$ has Fourier expansion $$F(z+w \lambda) = \sum_{r \in L'} \sum_{\mu \in (2m)^{-1} \mathbb{Z}} c(r,\mu) \mathbf{q}^{r} s^{\mu}, \; \; \mathbf{q}^{r} = e^{2\pi i \langle r, z \rangle}, \; s = e^{2\pi i (2m) w}$$ then $P_N F$ has Fourier expansion $$P_N^L F(z;\lambda) = \sum_{r \in L'} \Big( \sum_{\mu \in (2m)^{-1}\mathbb{Z}} c(r,\mu) G_N^{k+(1-\ell)/2}(2m \mu,mQ(r)) \Big) \mathbf{q}^{r}.$$ Moreover, $P_N^L F(z;\lambda)$ is homogeneous of degree $N$ in $\lambda$, i.e. for any $a \in \mathbb{Z}$, $$P_N^L F(z; a \lambda) = a^N \cdot P_N^L F(z;\lambda).$$
\end{cor}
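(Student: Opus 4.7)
The plan is to verify all the claims using the explicit form of $P_N^L$ together with results already in place. First I would compute the Fourier expansion directly. Since $P_N^L$ is a constant-coefficient differential operator in $\partial_w$ and $\Delta_L$, it acts diagonally on the basis $\{\mathbf{q}^r s^\mu\}$, and Lemma 2.4(i) together with $\partial_w s^\mu = 2\pi i \cdot 2m\mu \cdot s^\mu$ reduces the computation to substituting scalars into the polynomial $G_N^s$. Because $G_N^s(x,y)$ is $(1,2)$-weighted homogeneous of degree $N$, the substitution $x = \partial_w$, $y = m\Delta_L$ applied to $\mathbf{q}^r s^\mu$ factors out a uniform $(2\pi i)^N$ from every monomial, which cancels the normalization $(2\pi i)^{-N}$ in the definition of $P_{N;k}^L$ and produces the formula claimed.

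For modularity I would verify the identity $P_N^L(F|_k \tilde M) = (P_N^L F)|_{k+N} M$ for each type of generator of $\Gamma_L$ produced by Proposition 2.1, where $\tilde M \in \Gamma_\Lambda$ is the extension of $M$ fixing $\lambda$ as described at the start of Section 3. Translations $T_b$ and rotations $R_A$ are immediate because $\tilde T_b$ and $\tilde R_A$ leave $w$ fixed, have trivial cocycles, and commute with $\partial_w$ and $\Delta_L$. Scalings are handled by noting that $\tilde S_t$ sends $(z,w) \mapsto (tz,tw)$, so $\partial_w^{n_2}(m\Delta_L)^{n_1}$ picks up $t^{n_2+2n_1} = t^N$, lining up with the weight shift $k \to k+N$. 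The inversion case is exactly Theorem 3.5 and requires no new work; this is the one substantive point, and it has already been carried out.

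The growth conditions are then read off the Fourier expansion. A nonzero coefficient $c(r,\mu)$ forces $r - \mu\lambda \in \overline{P_\Lambda}$, hence $Q_L(r) = Q(r - \mu\lambda) + m\mu^2 \geq 0$, which after a brief component check places $r$ in the closed positive cone of $L$. For the cusp property when $N \geq 1$: if $Q_L(r) = 0$ then $r - \mu\lambda \in \overline{P_\Lambda}$ forces $\mu = 0$, and $g_N^s(0,0) = 0$ since every monomial in its defining sum already carries a positive power of $x$ or $y$. Finally, homogeneity in $\lambda$ is a rescaling argument: replacing $\lambda$ by $a\lambda$ sends $m \mapsto a^2 m$ and the fiber coordinate $w \mapsto w/a$, so $\partial_w \mapsto a\,\partial_w$; the arguments of $G_N^s$ become $(a\,\partial_w,\, a^2 m\Delta_L)$, and the $(1,2)$-weighted homogeneity of degree $N$ extracts exactly the factor $a^N$.
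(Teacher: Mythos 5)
Your proposal is correct and follows essentially the same route as the paper: the Fourier expansion is obtained by letting the operator act diagonally on $\mathbf{q}^r s^\mu$, modularity is checked on the generators of Proposition 2.1 with translations, rotations and scalings being immediate and the inversion case being exactly Theorem 3.5, the cusp property follows from $G_N^s(0,0)=0$ together with the support condition forcing $\mu=0$ when $Q_L(r)=0$, and homogeneity comes from $G_N^s(ax,a^2y)=a^N G_N^s(x,y)$. The only cosmetic differences are that the paper verifies homogeneity on the Fourier expansion rather than at the operator level, and it adds the (equally brief) remark that the vanishing argument applies at the other cusps of $X_L$ as well.
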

\begin{proof} Both $\Delta_L$ and $\partial_w^n |_{w=0}$ are invariant under translations, rotations and scalings of $\mathbb{H}_L$, so we find $$P_N^L (F|_k \tilde M)(\tau;\lambda) = (P_N^L F(\tau;\lambda)) \Big|_{k+N} M$$ for all holomorphic $F$ and all matrices $M$ of these types. ($\tilde M$ is as defined in the beginning of this section.) Together with the previous lemma this implies that $$P_N^L (F|_k \tilde M) (\tau;\lambda) = (P_N^L F(\tau;\lambda))|_{k+N} M \; \text{for all} \; M \in \mathrm{SO}^+((L \oplus \mathrm{II}_{1,1}) \otimes \mathbb{R}).$$ If $F$ was a modular form of weight $k$ then this implies that $P_N^L F(\tau;\lambda)$ is modular and has weight $k+N$.

The assertion about Fourier expansions holds because $\Delta_L^{n_1} (\mathbf{q}^r s^{\mu})|_{w=0} = (2\pi i)^{2n_1} Q(r)^{n_1} \mathbf{q}^r$ and because $\partial_w^{n_2} (\mathbf{q}^r s^{\mu})|_{w=0} = (2\pi i)^{n_2} (2m\mu)^{n_2} \mathbf{q}^r$. To see that $P_N^L F(z;\lambda)$ is a cusp form for $N \ge 1$, suppose $r \in L' \cap (\overline{P} \backslash P)$. Then $Q(r) = 0$, and the growth condition on $F$ implies that $c(r,\mu) = 0$ unless $\mu = 0$. Since $G_N^{k+(1-\ell)/2}(0,0) = 0$ for $N \ge 1$ it follows that the coefficient of $\mathbf{q}^r$ in $P_N^LF(z;\lambda)$ is $0$. A similar argument applies to all cusps of $X_L$. Finally if we replace $\lambda$ by $a \cdot \lambda$ and write $$F(z+w a\lambda) = \sum_{r \in L'} \sum_{\mu \in (2m)^{-1} \mathbb{Z}} c(r,\mu) \mathbf{q}^r s^{a\mu} = \sum_{r \in L'} \sum_{\mu \in (2ma^2)^{-1}\mathbb{Z}} c(r,a^2 \mu) \mathbf{q}^r e^{2\pi i (2ma^2 \mu)aw}$$ then \begin{align*} P_N^L F(z;a\lambda) &= \sum_{r \in L'} \sum_{\mu \in (2ma^2)^{-1}\mathbb{Z}} c(r,a^2 \mu) G_N^{k+(1-\ell)/2}(2ma^3 \mu, ma^2 Q(r)) \mathbf{q}^r \\ &= \sum_{r \in L'} \sum_{\mu \in (2m)^{-1}\mathbb{Z}} c(r,\mu) G_N^{k+(1-\ell)/2}(2ma \mu, ma^2 Q(r)) \mathbf{q}^r, \end{align*} which equals $a^N P_N^L F(z;\lambda)$ because of the homogeneity $G_N^s(ax,a^2y) = a^N G_N^s(x,y)$.
\end{proof}

\begin{rem} Since $P_N^L F(z;\lambda)$ is homogeneous in $\lambda$, it is natural to extend it to $\lambda \in L^{\perp} \otimes \mathbb{C}$ by defining $$P_N^L F(z;\lambda) := a^{-N} P_N^L F(z; a\lambda)$$ where $a \in \mathbb{C}^{\times}$ is such that $a \lambda \in \Lambda$ is a lattice vector.
\end{rem}

\begin{rem} It must be emphasized that $P_N^L (F |_k A)$ cannot be expressed in terms of $P_N^L (F)$ for \emph{all} matrices $A \in \Gamma_{\Lambda}$; only those of the form $A = \tilde M$ with $M \in \Gamma_L$. In particular the higher pullbacks of Eisenstein series $$E_{k}(z) = \sum_{A \in \Gamma_{\Lambda,\infty} \backslash \Gamma_{\Lambda}} 1 \Big|_k A,$$ where $\Gamma_{\Lambda,\infty}$ is the subgroup fixing some $0$-dimensional cusp, are generally nonzero, while trivially $P_N^L(1) = 0$ for all $N \ge 1$. These pullbacks can be computed using Section 5 below because $E_k(z)$ is a theta lift.
\end{rem}

\begin{rem} Suppose $F : \mathbb{H}_{\Lambda} \rightarrow \mathbb{C}$ is an orthogonal modular form of weight $k$ which vanishes to order $N$ on $\mathbb{H}_L$, and $\lambda \in L^{\perp}$ is primitive. The \emph{quasi-pullback} of $F$ to $\mathbb{H}_L$ is (up to $\pm 1$) the limit $$\mathrm{Q} F(z) = \lim_{w \rightarrow 0} (1 - e^{2\pi i w})^{-N} F(z+w\lambda) = \lim_{w \rightarrow 0} (2\pi i w)^{-N} F(z+w\lambda)$$ and is a modular form of weight $k+N$ and a cusp form if $N > 0$. This is, up to a constant multiple, a special case of the $N$-th pullback: since $\partial_w^j |_{w=0} F = 0$ for all $j < N$ and $\partial_w^n |_{w=0} F(z) = N! \cdot \mathrm{Q}F(z)$, we find \begin{align*} P_N^L F(z;\lambda) &= (2\pi i)^{-N} \frac{N!}{\Gamma(k+\frac{1-\ell}{2} + \lfloor N/2 \rfloor)} \sum_{2n_1 + n_2 = N} (-m)^{n_1} \frac{\Gamma(k+\frac{1-\ell}{2}+n_1 + n_2)}{n_1! n_2!} \Delta_L^{n_1} \Big( \partial_w^{n_2} \Big|_{w=0} F(z) \Big) \\ &=  \frac{N! \Gamma(k+\frac{1-\ell}{2} + N)}{\Gamma(k+\frac{1-\ell}{2} + \lfloor N/2 \rfloor)}  \cdot \mathrm{Q}F(z). \end{align*} More generally, $P_{N;k}^L F(z;\lambda)$ equals $P_{N-j;k+j}^L(w^{-j} F(z+w \lambda))$ up to a constant multiple for every $0 \le j \le N$.
\end{rem}

\begin{rem} It is possible to modify the Taylor coefficients of \emph{meromorphic} modular forms $F$ to obtain pullbacks using the arguments of this section. Here the poles of $F$ are irrelevant unless they occur on the divisor $w = 0$. Therefore assume $F$ has weight $k$ and a pole of order $\ell$ along $w=0$ and write out its Taylor expansion in the form $$F(z + \lambda w) = \sum_{j = -\ell}^{\infty} \phi_j(z;\lambda) w^j.$$ For any $v \in L \otimes \mathbb{R}$ of norm $1$, since $(w^{\ell} F)|_{k-\ell} J_v = w^{\ell} (F|_k J_v) = w^{\ell} F$, Theorem 3.5 gives us the equation $$P_{N;k-\ell}^L \Big( w^{\ell} F \Big) = P_{N;k-\ell}^L \Big( (w^{\ell} F) \Big|_{k-\ell} J_v \Big) = \Big( P_{N;k-\ell}^L (w^{\ell} F) \Big) \Big|_{k+N-\ell} J_v;$$ and as before, the behavior of $P_{N;k-\ell}^L (w^{\ell} F)$ under rotations, translations and scalings of $\mathbb{H}_L$ is easy to see. In particular, the $N$-th pullback of $$w^{\ell} F = \sum_{j=0}^{\infty} \phi_{j-\ell}(z;\lambda) w^j,$$ treated as if it were a modular form of weight $k-\ell$, is a true (meromorphic) modular form of weight $k-\ell+N$.

\end{rem}

\section{Pullbacks to special cycles}

In this section we define pullback operators to special cycles of arbitrary codimension. Let $(\Lambda,Q)$ be an even lattice of signature $(1,\ell-1)$ and let $L \subseteq \Lambda$ be a Lorentzian sublattice of arbitrary rank with orthogonal complement $L^{\perp}$ in $\Lambda$.  As before the orthogonal upper half-space can be written in coordinates as $$\mathbb{H}_{\Lambda} = \{z+w: \; z \in \mathbb{H}_L, \; w \in L^{\perp} \otimes \mathbb{C}, \; Q(\mathrm{im}(w)) < Q(\mathrm{im}(z))\},$$ and $X_L$ embeds as an analytic cycle on $X_{\Lambda}$ cut out locally by the equations $w = 0$. (Here we are not necessarily assuming that $L$ has strictly lower rank than $\Lambda$; although if $\mathrm{rank}\, L = \mathrm{rank}\, \Lambda$ then everything below is vacuous.)

Any modular form $F : \mathbb{H}_{\Lambda} \rightarrow \mathbb{C}$ can be expanded as a Fourier series in the form $$F(z,w) = \sum_{r \in L'} \sum_{\mu \in (L^{\perp})'} c(r,\mu) \mathbf{q}^{r} \mathbf{s}^{\mu}, \; \; \mathbf{q}^{r} = e^{2\pi i \langle r,z \rangle}, \; \mathbf{s}^{\mu} = e^{2\pi i \langle \mu,w \rangle},$$ where $c(r,\mu) = 0$ if $(r,\mu) \notin \Lambda'$.

We will work in the dual tensor algebra $T^*L^{\perp} = \bigoplus_{n=0}^{\infty} ((L^{\perp})^*)^{\otimes n}.$ Given a linear form $r : L^{\perp} \rightarrow \mathbb{Z}$ and a bilinear form $m : L^{\perp} \times L^{\perp} \rightarrow \mathbb{Z}$ we define multilinear $N$-forms $G_N^s(r,m) \in ((L^{\perp})^*)^{\otimes N}$ by analogy to the usual Gegenbauer polynomials. Namely we define $g_N^s(r,m)$ by symmetrizing the coefficient of $t^N$ in $(1 -rt + mt^2)^{-s} \in (T^*L^{\perp})[|t|]$, and we define $$G_N^s(r,m) := \frac{N! \Gamma(s)}{\Gamma(s + \lceil N/2 \rceil)} g_N^s(r,m).$$ For example $$G_0^s(r,m) = 1, \; G_1^s(r,m) = r, \; G_2^s(r,m) = (s+1) r \otimes r - 2\mathrm{Sym}(m).$$ Here the symmetrization of an $N$-form $\omega$ is $\mathrm{Sym}(\omega)(v_1,...,v_n) = \frac{1}{N!} \sum_{\sigma \in S_n} \omega(v_{\sigma(1)},...,v_{\sigma(n)})$.

\begin{thm}\label{pullbacks} Let $F : \mathbb{H}_{\Lambda} \rightarrow \mathbb{C}$ be an orthogonal modular form of weight $k$ with Fourier series $$F(z,w) = \sum_{\lambda \in L'} \sum_{\mu \in (L^{\perp})'} c(\lambda,\mu) \mathbf{q}^{\lambda} \mathbf{s}^{\mu}, \; \; \mathbf{q}^{\lambda} = e^{2\pi i \langle \lambda,z \rangle}, \; \mathbf{s}^{\mu} = e^{2\pi i \langle \mu,w \rangle}.$$ Let $B$ denote the bilinear form $B(x,y) = Q(x+y) - Q(x) - Q(y)$ restricted to $L^{\perp}$ and interpret dual lattice vectors $\mu \in (L^{\perp})'$ as the linear forms $v \mapsto \langle v, \mu \rangle$. For $N \in \mathbb{N}_0$, $$P_N^{L} F(z) = \sum_{\lambda \in L'} \Big( \sum_{\mu \in (L^{\perp})'} c(\lambda,\mu) G_N^{k - \mathrm{dim}\, L/2}(\mu, -Q(\lambda)B/2) \Big) \mathbf{q}^{\lambda}$$ is a $T^* L^{\perp}$-valued orthogonal modular form of weight $k+N$ on $\mathbb{H}_L$.
\end{thm}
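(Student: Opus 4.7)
The plan is to leverage the codimension-one pullback (Theorem 3.5 and Corollary 3.6) and reduce the general-codimension case to it by polarization. The observation is that $P_N^L F(z)$ as defined is manifestly a symmetric $N$-linear form on $L^{\perp}$, since $G_N^s(\mu, -Q(\lambda)B/2)$ is a symmetric element of $((L^{\perp})^*)^{\otimes N}$ by construction. A symmetric $N$-linear form is determined by its restriction to the diagonal $v_1 = \cdots = v_N = v^*$, so the modularity and cusp-form statements will follow once they are proved on the diagonal for arbitrary $v^* \in L^{\perp}$.

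Fix a nonzero $v^* \in L^{\perp}$; since $L$ has signature $(1, r-1)$ inside $\Lambda$ of signature $(1,\ell-1)$, the complement $L^{\perp}$ is negative-definite, and $Q(v^*) = -m$ with $m > 0$. After rescaling we may take $v^*$ to be a lattice vector, and consider the restriction $\tilde{F}$ of $F$ to the slice $\mathbb{H}_{L \oplus \mathbb{Z} v^*} \hookrightarrow \mathbb{H}_{\Lambda}$ corresponding to $w = w'\, v^*$. The Fourier expansion of $\tilde{F}$ groups the terms of $F$ with the same value of $\langle \mu, v^* \rangle$; since $G_N^s(\mu, -Q(\lambda)B/2)(v^*,\ldots,v^*) = g_N^s(\mu(v^*), -Q(\lambda)B(v^*,v^*)/2)$ and $-B(v^*,v^*)/2 = -Q(v^*) = m$, the diagonal evaluation $P_N^L F(z; v^*,\ldots,v^*)$ reduces exactly to the codimension-one pullback $P_N^{L \oplus \mathbb{Z} v^*\!\to L}(\tilde{F})(z; v^*)$ of Corollary 3.6, with matching Gegenbauer index $k - \dim L/2$ coming from the ambient dimension $\dim L + 1$ of the slice.

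Now for inversions: if $v \in L$ with $Q(v) = 1$, then $J_v \in G$ fixes $L^{\perp}$ pointwise, so restriction to $\mathbb{H}_{L \oplus \mathbb{Z} v^*}$ commutes with $|_k J_v$, and Theorem 3.5 applied to $\tilde{F}$ gives
\[
P_N^L(F|_k J_v)(z; v^*,\ldots,v^*) = P_N^L F(z; v^*,\ldots,v^*) \,\big|_{k+N}\, J_v.
\]
Polarizing in $v^*$ extends this identity to arbitrary $(v_1,\ldots,v_N) \in (L^{\perp})^N$. The behavior under the remaining generators of $\mathrm{SO}^+((L \oplus \mathrm{II}_{1,1}) \otimes \mathbb{R})$ is direct: translations $T_b$ multiply $c(\lambda,\mu)$ by $e^{2\pi i \langle b, \lambda \rangle}$; rotations $R_A$, $A \in \mathrm{SO}^+(L)$, permute coefficients while leaving $Q$ and $B$ invariant; and scalings act compatibly with the weight $k+N$. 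Combined with the inversion identity, Proposition 2.1 (applied to the $L$-side) gives $\Gamma_L$-invariance of weight $k+N$. Finally, $G_N^s(0,0) = 0$ for $N \ge 1$ forces the Fourier coefficients at norm-zero $\lambda$ to vanish unless they are paired with $\mu$ such that $\mu(v^*) = 0$ for all $v^*$, i.e.\ $\mu = 0$, reducing the cusp-form statement to the same argument as in Corollary 3.6. The main obstacle is the bookkeeping in the second step — correctly identifying the diagonal evaluation of the multilinear $G_N^s$ with the scalar $G_N^s$ of Section 3 under the regrouping $\mu \in (L^{\perp})' \leadsto \mu' = \mu(v^*)/(2m) \in (2m)^{-1}\mathbb{Z}$ — but once this is verified, the inversion case follows formally from the codimension-one result.
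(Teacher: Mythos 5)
Your proposal is correct and follows essentially the same route as the paper: restrict $F$ to the rank-one extension $L \oplus \mathbb{Z}v^*$, identify the diagonal evaluation of the multilinear Gegenbauer form with the scalar codimension-one pullback of Corollary 3.6, and recover the general values by polarization. (The only quibble is the harmless notational slip where $G_N^s$ on the diagonal is equated with the unnormalized $g_N^s$ rather than the rescaled $G_N^s$.)
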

More explicitly, for any vectors $v_1,...,v_N \in L^{\perp} \otimes \mathbb{C}$ we obtain an orthogonal modular form $$P_N^L F(z;v_1,...,v_N) = \sum_{\lambda \in L'} \Big( \sum_{\mu \in (L^{\perp})'} c(\lambda,\mu) G_N^{k - \mathrm{dim}\, L/2}(\mu,-Q(\lambda)B/2)(v_1,...,v_N) \Big) \mathbf{q}^{\lambda}.$$
\begin{proof} Fix any vector $v \in L^{\perp}$. By restricting $F$ to the symmetric space associated to the lattice $L \oplus \langle v \rangle$  (interpreted as a subset of $\mathbb{H}_L \times \mathbb{C}$) we obtain the modular form $$F_v : \mathbb{H}_{L \oplus \langle v \rangle} \rightarrow \mathbb{C}, \; \; F_v(z,w) := F(z + wv) = \sum_{\lambda \in L'} \sum_{\mu \in (L^{\perp})'} c(\lambda,\mu) \mathbf{q}^{\lambda} s^{\mu(v)}, \; \; s = e^{2\pi i w},$$ whose higher pullbacks $P_N^L(F_v)$ are modular forms of weight $k+N$ on $\mathbb{H}_L$. The Fourier expansion of $P_N^L(F_v)$ is $$P_N(F_v)(z) = \sum_{\lambda \in L'} \Big( \sum_{\mu \in (L^{\perp})'} c(\lambda,\mu) G_N^{k - \mathrm{dim}\, L / 2}(\mu(v), -Q(\lambda)Q(v)) \Big) \mathbf{q}^{\lambda},$$ i.e. $P_N(F_v)(z) = P_N^{L} F(z;v,...,v)$. In particular $P_N^{L} F$ is a symmetric multilinear form whose diagonal values $P_N^L F(z;v,...,v)$ are modular forms, and cusp forms if $N \ge 1$. Every value $P_N^L F(z;v_1,...,v_n)$ is obtained as a linear combination of the diagonal values through the polarization identity.
\end{proof}

\section{Partial development coefficients of Jacobi forms and higher pullbacks of theta lifts}

One important construction of orthogonal modular forms is the theta lift from vector-valued modular forms (of half-integral weight) for $\mathrm{Mp}_2(\mathbb{Z})$. It turns out that the (higher) pullbacks of theta lifts are themselves theta lifts. In the special case that our Lorentzian lattice $\Lambda$ splits in the form $L(-1) \oplus \mathrm{II}_{1,1}$, one can identify the vector-valued modular forms in question with Jacobi forms of lattice index $L$, and the pullbacks of the theta lift of a Jacobi form is essentially the theta lift of its development coefficients (in an appropriate sense). In this section we explain how the development coefficients are defined for Jacobi forms of lattice index and we explain how to generalize this to vector-valued modular forms attached to lattices of arbitrary signature. Finally we prove that the development coefficients and higher pullbacks fit into a commutative diagram involving the theta lift. \\

\subsection{Jacobi forms} Let $(L,Q)$ be a positive-definite even lattice and let $\rho : \mathrm{Mp}_2(\mathbb{Z}) \rightarrow \mathrm{GL}\, V$ be a finite-dimensional representation. A \emph{Jacobi form} of weight $k$ and index $L$ and multiplier $\rho$ is a holomorphic function $\phi : \mathbb{H} \times (L \otimes \mathbb{C}) \rightarrow V$ satisfying a vanishing condition on Fourier coefficients (explained below) and the following functional equations. \\
(i) For all $(\begin{psmallmatrix} a & b\\ c &d \end{psmallmatrix}, \phi) \in \mathrm{Mp}_2(\mathbb{Z})$, $$\phi \left( \frac{a \tau + b}{c \tau + d}, \frac{z}{c \tau + d} \right) = \phi(\tau)^{2k} e^{2\pi i \frac{c}{c \tau + d} Q(z)} \rho(\begin{psmallmatrix} a & b \\ c & d \end{psmallmatrix}, \phi) \phi(\tau,z).$$
(ii) For all $\lambda,\mu \in L$, $$\phi(\tau,z+\lambda \tau) = e^{-2\pi i (\tau Q(\lambda) + \langle \lambda,z \rangle)} \phi(\tau,z) \; \text{and} \; \phi(\tau,z+\mu) = \phi(\tau,z).$$

Every Jacobi form $\phi$ has a Fourier expansion which takes the form $$\phi(\tau,z) = \sum_{n \in \mathbb{Q}} \sum_{r \in L'} c(n,r) q^n \zeta^r, \; q = e^{2\pi i \tau}, \; \zeta^r = e^{2\pi i \langle r,z \rangle}, \; c(n,r) \in V.$$ The vanishing condition referred to above is that $c(n,r) = 0$ whenever $Q(r) > n$.

\begin{rem} The more familiar (scalar-valued) Jacobi forms of weight $k$ and index $m \in \mathbb{N}$ as in \cite{EZ} are the special case of Jacobi forms indexed by a rank-one lattice: $L = \mathbb{Z}$ and $Q(v) = mv^2$ with bilinear form $\langle v,w \rangle = 2mvw$. Also, modular forms are Jacobi forms for the rank-zero lattice $L = \{0\}$.
\end{rem}

Given a Jacobi form $\phi$ of weight $k$, the development coefficients $D_0 \phi, D_1 \phi$ are a modification of the Taylor coefficients of $\phi$ about $z=0$ which yield vector-valued modular forms of weight $k,k+1,...$ taking values in $V \otimes T^*L$, where $T^*L = \bigoplus_{n=0}^{\infty} (L^* \otimes \mathbb{C})^{\otimes n}$. In particular $D_N \phi$ should be thought of as a symmetric multilinear form which takes $N$ input vectors in $L$ and produces a modular form of weight $k+N$ and multiplier $\rho$.

\begin{lem} Let $B(x,y) = \langle x,y \rangle = Q(x+y) - Q(x) - Q(y)$ be the bilinear form on $L$ and identify dual lattice vectors $r$ with one-forms $v \mapsto \langle r,v \rangle$. Let $\phi(\tau,z) = \sum_{n,r} c(n,r) q^n \zeta^r$ be a Jacobi form of weight $k$ and index $L$ and multiplier $\rho$. For any $N \in \mathbb{N}_0$ and $\alpha \in L^{\otimes N}$, $$D_N \phi(\tau;\alpha) = \sum_{n \in \mathbb{Q}} \Big( \sum_{r \in L'} c(n,r) G_N^{k-1}(r,nB/2)(\alpha) \Big)q^n$$ is a modular form of weight $k+N$ and multiplier $\rho$ and it is a cusp form if $N > 0$.
\end{lem}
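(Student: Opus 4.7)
My plan is to verify the three defining properties of a vector-valued modular form of weight $k+N$ and multiplier $\rho$: the Fourier expansion and cusp condition, $T$-invariance, and $S$-invariance.

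Holomorphy and the cusp condition are routine. Since $L$ is positive-definite and $c(n,r) = 0$ unless $n \ge Q(r)$, the inner sum over $r \in L'$ in the definition of $D_N \phi$ is finite for each $n$ and supported on $n \ge 0$. For $N \ge 1$, the only potentially nonzero term at $n = 0$ would come from $r = 0$; but the generating-function identity $\sum_N G_N^s(0,0)\,t^N = (1)^{-s} = 1$ shows $G_N^{k-1}(0,0) = 0$ for $N \ge 1$, giving the cusp form property. Invariance under $T$ is immediate, since $\phi(\tau+1,z) = \rho(T)\phi(\tau,z)$ acts on Fourier coefficients by $c(n,r) \mapsto e^{2\pi i n}\rho(T)c(n,r)$, and $D_N\phi$ is $\mathbb{C}$-linear in those coefficients.

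The main content is invariance under $S = \begin{psmallmatrix} 0 & -1 \\ 1 & 0 \end{psmallmatrix}$. The strategy mirrors the one used for the orthogonal inversions $J_v$ in Section 3. I would rewrite $D_N\phi(\tau;\alpha)$ as a differential operator in $z$ and $\tau$ evaluated at $z=0$, via the substitutions $r \leftrightarrow (2\pi i)^{-1}\partial_\alpha$ (directional derivative in the direction $\alpha$) and $n \leftrightarrow (2\pi i)^{-1}\partial_\tau$. Applying this operator to both sides of the Jacobi $S$-transformation
$$\phi(-1/\tau,z/\tau) = \tau^k e^{2\pi i Q(z)/\tau}\rho(S)\phi(\tau,z),$$
one tracks the chain-rule factors of $\tau^{-1}$ from the rescaling $z \mapsto z/\tau$ and expands the theta factor as a power series in $Q(z)/\tau$. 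The cross-terms between the theta factor and the derivatives of $\phi$ should collapse via the generating-function definition of the Gegenbauer polynomial together with the same Gamma-binomial vanishing identity
$$\sum_{i=0}^n (-1)^i \binom{n}{i} \frac{\Gamma(i+s)}{\Gamma(i+s+1-n)} = 0 \quad (n > 0),$$
that drove the proof of Theorem 3.5, producing exactly $\tau^{k+N}\rho(S)D_N\phi(\tau;\alpha)$.

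The main obstacle is this $S$-invariance step: concretely, the bookkeeping needed to see the cancellation, and conceptually, the recognition that the theta factor $e^{2\pi i Q(z)/\tau}$ plays the role previously played by the conformal factor $Q(z)^{-k}$ for $J_v$. Because the Gegenbauer polynomial is defined precisely through the generating function that makes these cross-terms cancel, the argument should be essentially a direct adaptation of the proof of Theorem 3.5 to the Jacobi setting, and I do not expect substantial new ideas to be needed beyond aligning the differential-operator and Fourier-coefficient formulations of $D_N\phi$.
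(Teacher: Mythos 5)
Your proof takes a genuinely different route from the paper's. The paper disposes of the lemma in two lines: by the polarization identity a symmetric multilinear form is determined by its diagonal values, so one may assume $\alpha = v \otimes \cdots \otimes v$; then $z \mapsto \phi(\tau, zv)$ is a Jacobi form of scalar index $Q(v)$ (note $nB/2(v,\dots,v) = nQ(v)$, matching the classical index-$m$ Gegenbauer argument), and the claim reduces to the development-coefficient theorem of Eichler--Zagier, Chapter 3, which is cited rather than reproved. You instead verify the transformation laws directly, recasting $D_N\phi$ as a Gegenbauer polynomial in $(2\pi i)^{-1}\partial_\tau$ and the directional derivatives $\partial_{v_i}\big|_{z=0}$, and arguing that under $S$ the cross-terms coming from the theta factor $e^{2\pi i Q(z)/\tau}$ cancel by the same $\Gamma$-binomial identity that drives the proof of Theorem 3.5. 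This is essentially the Eichler--Zagier proof itself, transplanted to lattice index, so the mechanism you identify is the right one and the strategy is sound; your treatment of the cusp condition via $G_N^{k-1}(0,0)=0$ and of $T$-invariance is correct as written. What your route buys is a self-contained argument that does not lean on the scalar-index literature; what it costs is that the entire content of the lemma sits in the $S$-invariance bookkeeping, which you explicitly leave unexecuted (``the bookkeeping needed to see the cancellation''), whereas the paper's reduction makes that computation someone else's theorem. If you want to keep your direct approach, you should either carry out the cancellation in full or at least note that the polarization-plus-restriction trick reduces the multivariable computation to the one-variable case, which is the step that makes the citation of \cite{EZ} legitimate and would equally shorten your own argument.
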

Here $G_N^{k-1}$ are the multilinear Gegenbauer polynomials as defined in the previous section. For example the first few development coefficients of $\phi$ are \begin{align*} D_0 \phi(\tau) &= \sum_n \Big( \sum_{r \in L'} c(n,r) \Big)q^n; \\ D_1 \phi(\tau;v) &= \sum_n \Big( \sum_{r \in L'} \langle r,v \rangle c(n,r) \Big) q^n; \\ D_2 \phi(\tau;v_1 \otimes v_2) &=  \sum_n \Big( \sum_{r \in L'} (k \langle r,v_1 \rangle \langle r, v_2 \rangle - n \langle v_1, v_2 \rangle) c(n,r) \Big) q^n. \end{align*}
\begin{proof} As in the proof of Theorem 4.1, use the polarization identity to reduce to the case that $\alpha$ is a diagonal pure tensor: $\alpha = v \otimes ... \otimes v$ for some $v \in L$. The form $$\phi_v : \mathbb{H} \times \mathbb{C} \rightarrow V, \; \phi_v(\tau,z) = \phi(\tau,zv)$$ is a Jacobi form of the same weight and multiplier and of scalar index $Q(v)$, and $D_N \phi(\tau;\alpha)$ is, up to a scalar multiple, the $N^{\text{th}}$ development coefficient of $\phi_v$ in the sense of Chapter 3 of \cite{EZ}.
\end{proof}

\subsection{Weil representations and partial development coefficients} Let $(\Lambda,Q)$ be an even lattice with discriminant group $A = \Lambda'/\Lambda$, and let $\sigma$ be its signature. The \emph{Weil representation} associated to $\Lambda$ is the representation $\rho_{\Lambda}$ of $\mathrm{Mp}_2(\mathbb{Z})$ on the group ring $\mathbb{C}[A]$ where the generators $$S = \left( \begin{psmallmatrix} 0 & -1 \\ 1 & 0 \end{psmallmatrix}, \sqrt{\tau} \right), \; T = \left( \begin{psmallmatrix} 1 & 1 \\ 0 & 1 \end{psmallmatrix}, 1 \right)$$ act on the natural basis $\mathfrak{e}_{\gamma}$, $\gamma \in A$ by $$\rho_{\Lambda}(T) \mathfrak{e}_{\gamma} = e^{2\pi i Q(\gamma)} \mathfrak{e}_{\gamma}, \; \; \rho_{\Lambda}(S) \mathfrak{e}_{\gamma} = \frac{1}{\sqrt{|\Lambda'/\Lambda|}} e^{2\pi i (\sigma /8)} \sum_{\beta \in \Lambda'/\Lambda} e^{-2\pi i \langle \gamma, \beta \rangle} \mathfrak{e}_{\beta}.$$

The most familiar setting where this representation appears is the multiplier of the theta function. If $\Lambda$ is positive-definite then $$\Theta_{\Lambda}(\tau,z) = \sum_{\lambda \in \Lambda'} q^{Q(\lambda)} \zeta^{\lambda} \mathfrak{e}_{\lambda + \Lambda}$$ is a Jacobi form of weight $(\mathrm{dim}\, \Lambda)/2$, lattice index $\Lambda$, and multiplier $\rho_{\Lambda}$. \\


Suppose $L \subseteq \Lambda$ is a sublattice and define $\tilde \Lambda := L^{\perp} \oplus L \subseteq \Lambda$ with discriminant group $B$. Then the Weil representations attached to $L$ and $\tilde \Lambda$ (or $A$ and $B$) are related by the intertwining operators $$\downarrow^{A}_{B} : \mathbb{C}[A] \longrightarrow \mathbb{C}[B], \; \; \mathfrak{e}_{\gamma} \mapsto \sum_{\substack{\delta \in \tilde \Lambda'/\tilde \Lambda \\ \delta + \Lambda = \gamma}} \mathfrak{e}_{\delta}$$ and $$\uparrow^{A}_{B} : \mathbb{C}[B] \longrightarrow \mathbb{C}[A], \; \; \mathfrak{e}_{\delta} \mapsto \begin{cases} \mathfrak{e}_{\delta + \Lambda}: & \delta \in \Lambda'; \\ 0: & \text{otherwise}; \end{cases}$$ in the sense that $\rho_{\tilde \Lambda}(M) \downarrow^{A}_{B} = \downarrow^{A}_{B} \rho_{\Lambda}(M)$ and $\rho_{\Lambda}(M) \uparrow^{A}_{B}= \uparrow^{A}_{B} \rho_{\tilde \Lambda}(M)$ for every $M \in \mathrm{Mp}_2(\mathbb{Z})$, as one can check on the generators $M = S,T$. On modular forms these induce the up-arrow and down-arrow maps as in Lemma 5.5 and Lemma 5.6 of \cite{Br}.

We define a  $\mathbb{C}$-linear ``trace map" along the sublattice $L \subseteq \Lambda$ by $$\mathrm{Tr}_L : \mathbb{C}[L'/L] \otimes \mathbb{C}[\Lambda'/\Lambda] \otimes \mathbb{C}[\Lambda'/\Lambda] \longrightarrow \mathbb{C}[L'/L], \; \; \mathfrak{e}_{\beta} \otimes \mathfrak{e}_{\gamma} \otimes \mathfrak{e}_{\delta} \mapsto \begin{cases} \mathfrak{e}_{\beta}: & \gamma = \delta; \\ 0 : & \text{otherwise}. \end{cases}$$ This respects the Weil representations in the sense that $$\mathrm{Tr}_L \Big( \rho_L(M) \mathfrak{e}_{\beta} \otimes \rho_{\Lambda}(M) \mathfrak{e}_{\gamma} \otimes \rho_{\Lambda(-1)}(M) \mathfrak{e}_{\delta}\Big) = \rho_L(M) \mathrm{Tr}_L(\mathfrak{e}_{\beta} \otimes \mathfrak{e}_{\gamma} \otimes \mathfrak{e}_{\delta})$$ for all $M \in \mathrm{Mp}_2(\mathbb{Z}).$ In the special case that $L = \{0\}$ we simply write $$\mathrm{Tr} : \mathbb{C}[\Lambda'/\Lambda] \otimes \mathbb{C}[\Lambda'/\Lambda] \longrightarrow \mathbb{C}.$$

The trace map may be used to define the \emph{theta decomposition} which is an isomorphism $\Theta$ between certain spaces of vector-valued modular forms and Jacobi forms. Specifically, if $\Lambda$ is a negative-definite even lattice then we get isomorphisms $$\Theta : M_k(\rho_{\Lambda}) \stackrel{\sim}{\rightarrow} J_{k+\mathrm{dim}\, \Lambda/2,\Lambda(-1)}, \; \; \Theta F = \mathrm{Tr}(F \otimes \Theta_{\Lambda(-1)}).$$ On Fourier expansions this acts formally by sending $q^n \mathfrak{e}_{\lambda + \Lambda}$ to $q^{n - Q(\lambda)} \zeta^{\lambda}$; in other words, $$\Theta \Big( \sum_{\gamma \in \Lambda'/\Lambda} \sum_{n \in \mathbb{Z} + Q(\gamma)} c(n,\gamma) q^n \mathfrak{e}_{\gamma} \Big) = \sum_{\lambda \in \Lambda'} \sum_{n \in \mathbb{Z} + Q(\lambda)} c(n,\lambda + \Lambda) q^{n - Q(\lambda)} \zeta^{\lambda}.$$

These operators may be used to define the (partial) development coefficients of a modular form $F \in M_k(\rho_{\Lambda})$ along a negative-definite sublattice of $\Lambda$.

\begin{defn} Let $(\Lambda,Q)$ be an even lattice and let $L \subseteq \Lambda$ be a sublattice with negative-definite orthogonal complement $L^{\perp}$. Let $F \in M_k(\rho_{\Lambda})$. For $N \in \mathbb{N}_0$ the \emph{development coefficients of $F$ along $L$} are the development coefficients of a corresponding Jacobi form: $$D^{L}_N F(\alpha) := D_N \mathrm{Tr}_{L}\Big((\downarrow^{\Lambda}_{L^{\perp} \oplus L} F) \otimes \Theta_{L^{\perp}(-1)} \Big)(\alpha) \in M_{k+N+\mathrm{dim}\, K / 2}(\rho_L), \; \alpha \in (L^{\perp})^{\otimes N}.$$ Here $\mathrm{Tr}_{L}(\downarrow^{\Lambda}_{L^{\perp} \oplus L} F \otimes \Theta_{L^{\perp}(-1)})$ is a Jacobi form of weight $k+\mathrm{dim}\, L^{\perp}/2$, index $L^{\perp}(-1)$ and multiplier $\rho_{L}$.
\end{defn}

In particular, if $F$ has Fourier expansion $F(\tau) = \sum_{\gamma \in \Lambda'/\Lambda} \sum_{n \in \mathbb{Z} + Q(\gamma)} c(n,\gamma) q^n \mathfrak{e}_{\gamma}$, then its development coefficients along $L$ have Fourier expansions $$D_N^L F(\tau;\alpha) = \sum_{\gamma \in L'/L} \sum_{n \in \mathbb{Z} + Q(\gamma)} \Big( \sum_{\substack{\lambda \in (L^{\perp})' \\ (\gamma,\lambda+L^{\perp}) \in \Lambda'/\Lambda}} c(n+Q(\lambda),(\gamma,\lambda)) \cdot G_N^{k - 1 + \mathrm{dim}\, L^{\perp}/2}(\lambda, n B/2)(\alpha) \Big) q^n \mathfrak{e}_{\gamma}$$ for $\alpha \in (L^{\perp})^{\otimes n}.$

\begin{cor} Let $L$ be a positive-definite even lattice and let $K \subseteq L$ be a sublattice with orthogonal complement $K^{\perp}$ in $L$. Let $\phi(\tau,z) = \sum_{n,r} c(n,r) q^n \zeta^r$ be a Jacobi form of weight $k$, index $L$ and multiplier $\rho$. Let $B$ denote the bilinear form on $K^{\perp}$ induced by restricting $Q$. For every $N \in \mathbb{N}_0$ and $\alpha \in (K^{\perp})^{\otimes N}$, the \emph{partial development coefficient} $$D_N^K(\tau,z;\alpha) := \sum_{n \in \mathbb{Q}} \sum_{r_K \in K'} \Big( \sum_{r_{K^{\perp}} \in (K^{\perp})'} c(n,r_K,r_{K^{\perp}}) G_N^{k-1-(\mathrm{dim}\, K)/2}(r_{K^{\perp}},\frac{nB}{2})(\alpha) \Big) q^n \zeta^{r_K}, \; z \in K \otimes \mathbb{C}$$ is a Jacobi form of weight $k+N$, index $K$ and multiplier $\rho$.
\end{cor}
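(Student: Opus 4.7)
The plan is to reduce to the one-variable Eichler--Zagier development, just as Lemma 5.2 is reduced to the scalar case. First, using the polarization identity I may assume that $\alpha = v^{\otimes N}$ is a diagonal pure tensor with $v \in K^{\perp}$. Then the restriction
\[
\phi_v(\tau, z, w) := \phi(\tau,\, z + wv), \qquad z \in K \otimes \mathbb{C},\ w \in \mathbb{C},
\]
is manifestly a Jacobi form of weight $k$, lattice index $K \oplus \langle v\rangle$, and multiplier $\rho$, where $\langle v \rangle$ is the rank-one sublattice of $K^{\perp}$ generated by $v$ with scalar index $m := Q(v)$. Its Fourier expansion is obtained from that of $\phi$ by substituting $\zeta^r = \zeta^{r_K} e^{2\pi i w \langle r_{K^{\perp}}, v\rangle}$, so the coefficient of $q^n \zeta^{r_K} e^{2\pi i s w}$ equals the sum of $c(n, r_K, r_{K^{\perp}})$ over those $r_{K^{\perp}} \in (K^{\perp})'$ with $\langle r_{K^{\perp}}, v\rangle = s$.

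Next I would pass through the theta decomposition in the $z$-variable alone, writing
\[
\phi_v(\tau, z, w) = \sum_{\gamma \in K'/K} h_\gamma(\tau, w)\, \Theta_{K, \gamma}(\tau, z),
\]
where $\Theta_{K, \gamma}$ are the theta components of $K$ (of weight $\tfrac{1}{2}\dim K$) and the tuple $(h_\gamma)_{\gamma}$ transforms as a vector-valued Jacobi form in $(\tau, w)$ of weight $k - \tfrac{1}{2}\dim K$, scalar index $m$, and multiplier $\rho \otimes \rho_{K(-1)}$. Now I would apply the classical Eichler--Zagier $N$-th development coefficient (Chapter~3 of \cite{EZ}) componentwise in the $w$-variable to $(h_\gamma)$; since these components have weight $k - \tfrac{1}{2}\dim K$, the resulting vector-valued modular form has weight $k - \tfrac{1}{2}\dim K + N$, multiplier $\rho \otimes \rho_{K(-1)}$, and Fourier coefficients built from the polynomial
\[
G_N^{(k - \tfrac{1}{2}\dim K) - 1}(s,\, n m) \;=\; G_N^{k - 1 - \dim K/2}(\langle r_{K^{\perp}}, v\rangle,\, n\, Q(v)),
\]
matching exactly the Gegenbauer expression evaluated on the diagonal $\alpha = v^{\otimes N}$ in the statement of the corollary.

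Finally I would undo the theta decomposition: multiplying the vector-valued modular form by the components $\Theta_{K, \gamma}(\tau, z)$ and summing over $\gamma$ reassembles a Jacobi form of weight $k+N$, index $K$, and multiplier $\rho$. A direct coefficient comparison identifies this Jacobi form with $D_N^K(\tau, z; v^{\otimes N})$ as defined in the statement, and polarization then gives the claim for arbitrary $\alpha \in (K^{\perp})^{\otimes N}$. The main point requiring care is the bookkeeping of weights, multipliers, and the shift in the Gegenbauer exponent across the theta decomposition; once those are aligned, the modular transformation properties are inherited directly from the (vector-valued) one-variable Eichler--Zagier theory, so no genuinely new analytic input is needed.
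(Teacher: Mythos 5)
Your route is essentially the one the paper takes: the printed proof is the single sentence that $D_N^K\phi$ is the Jacobi form whose theta decomposition along $K$ is $D_N^{K(-1)}F$, where $F$ is the theta decomposition of $\phi$; unwinding Definition 5.4 and Lemma 5.2, that is exactly your chain of polarization, restriction to a rank-one complement of $K$, (partial) theta decomposition along $K$, and the Eichler--Zagier development applied to the components $h_\gamma$. Your accounting of weights, indices and multipliers is correct, including the shift of the Gegenbauer parameter from $k-1$ to $k-1-\frac{1}{2}\dim K$ coming from the weight $k-\frac{1}{2}\dim K$ of the $h_\gamma$.

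The one place the argument as written goes wrong is the final ``direct coefficient comparison,'' which you assert rather than perform --- and it is exactly the bookkeeping you yourself flag as delicate. The theta decomposition shifts the $q$-exponent: writing $h_{r_K+K}(\tau,w)=\sum_{D,s}b(D,s)\,q^{D}e^{2\pi i s w}$, one has $b(D,s)=\sum_{\langle r_{K^\perp},v\rangle=s}c(D+Q(r_K),r_K,r_{K^\perp})$, so the Eichler--Zagier development of $h_\gamma$ attaches to $c(n,r_K,r_{K^\perp})$ the polynomial $G_N^{k-1-\dim K/2}\bigl(\langle r_{K^\perp},v\rangle,\,(n-Q(r_K))Q(v)\bigr)$: the second argument is $(n-Q(r_K))B/2$, not the $nB/2$ displayed in the statement. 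Your construction is in fact the correct one and the displayed formula contains a typo: the Fourier coefficients of a Jacobi form of index $K$ may depend on $(n,r_K)$ only through $n-Q(r_K)$ and $r_K \bmod K$, and since $G_N^s$ genuinely depends on its second argument for $N\ge 2$, the version with $nB/2$ would violate the elliptic transformation law under $z\mapsto z+\lambda\tau$, $\lambda\in K$. (The two versions coincide for $N\le 1$ or $K=\{0\}$, which is why Lemma 5.2 is unaffected.) So: same method as the paper and structurally sound, but carry out the coefficient computation rather than asserting it --- done honestly, it corrects the stated formula instead of reproducing it.
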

As usual $c(n,r_K,r_{K^{\perp}}) = c(n,r)$ if there is a linear form $r : L \rightarrow \mathbb{Z}$ with $r|_K = r_K$ and $r|_{K^{\perp}} = r_{K^{\perp}}$ and $c(n,r_K,r_{K^{\perp}}) = 0$ otherwise.
\begin{proof} If $\phi$ has theta decomposition $F$ then $D_N^K \phi(\tau,z;\alpha)$ is the Jacobi form whose theta decomposition is $D_N^{K(-1)} F(\tau;\alpha)$.
\end{proof}

\subsection{Fourier-Jacobi expansions} Suppose $\Lambda$ splits in the form $L(-1) \oplus \mathrm{II}_{1,1}$ where $L$ is positive-definite and write the upper half-space in the form $$\mathbb{H}_{\Lambda} = \Big\{(\tau,z,w): \;\tau,z \in \mathbb{H}, \; z \in L \otimes \mathbb{C}, \; Q_L(\mathrm{im}\, z) < (\mathrm{im}\, \tau) \cdot (\mathrm{im}\, w) \Big\}.$$ The partial Fourier expansion of an orthogonal modular form $F$ for $\Lambda$ with respect to $w$ takes the form $$F(\tau,z,w) = \sum_{n=0}^{\infty} \phi_n(\tau,z) s^n, \; s = e^{2\pi i w}.$$ Each $\phi_n$ above can be shown to be a Jacobi form of weight $k$ and lattice index $L(n)$, i.e. $L$ with quadratic form $n \cdot Q$ (cf. \cite{G} for details). The Fourier expansion above is called the \emph{Fourier-Jacobi expansion} of $F$. \\

Certain pullbacks have the simple effect of applying partial development coefficients to the Fourier-Jacobi expansion termwise:

\begin{prop} Suppose $F$ is an orthogonal modular form of weight $k$ with Fourier-Jacobi expansion $$F(\tau,z,w) = \sum_{n=0}^{\infty} \phi_n(\tau,z) s^n.$$ Let $K \subseteq L$ be a sublattice. Then the pullbacks of $F$ to $K(-1) \oplus \mathrm{II}_{1,1}$ have Fourier-Jacobi expansions $$P_N^{K(-1) \oplus \mathrm{II}_{1,1}} F(\tau,z_1,w;\alpha) =  \sum_{n=0}^{\infty} D_N^K \phi_n(\tau,z_1;\alpha) s^n, \; (\tau,z_1,w) \in \mathbb{H}_{K(-1) \oplus \mathrm{II}_{1,1}}.$$
\end{prop}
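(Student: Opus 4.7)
The plan is to verify the identity by computing both sides' Fourier expansions and matching them coefficient by coefficient.

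First I would expand $F$ in its Fourier series $F(\tau,z,w) = \sum_{\lambda \in \Lambda'} c_F(\lambda)\, \mathbf{q}^\lambda$ for $\Lambda = L(-1) \oplus \mathrm{II}_{1,1}$, parametrizing $\lambda = (\rho, m, n)$ with $\rho \in L'$ and $(m,n)$ the $\mathrm{II}_{1,1}$-coordinates so that $\mathbf{q}^\lambda = q^n s^m e^{-2\pi i \langle \rho, z \rangle_L}$. Grouping by powers of $s$ identifies the Fourier--Jacobi coefficient
\[
\phi_m(\tau, z) = \sum_{\rho,\, n} c_F\bigl((-\rho, m, n)\bigr)\, q^n \zeta^\rho,
\]
which is a Jacobi form of weight $k$ and lattice index $L(m)$.

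Next I would apply Theorem~\ref{pullbacks} to pull back $F$ along the sublattice $K(-1) \oplus \mathrm{II}_{1,1} \subset \Lambda$, whose orthogonal complement is $K^\perp(-1)$. Writing $\rho = \rho_K + \rho_{K^\perp}$, the Fourier index $\lambda' = (\rho_K, m, n) \in (K(-1) \oplus \mathrm{II}_{1,1})'$ has norm $Q(\lambda') = -Q_K(\rho_K) + mn$ and the bilinear form on $K^\perp(-1)$ is $-B_{K^\perp}$; substituting into Theorem~\ref{pullbacks} the Gegenbauer argument becomes
\[
G_N^{k-1-\dim K/2}\bigl(\mu,\; (mn - Q_K(\rho_K))\, B_{K^\perp}/2\bigr)(\alpha),
\]
where $\mu \in (K^\perp)'$ is viewed as a linear form via the pairing inherited from $K^\perp(-1)$. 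Regrouping the result by powers of $s$, the $s^m$-coefficient is a sum naturally indexed by the Fourier data of $\phi_m$.

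The final step is to identify this $s^m$-coefficient with $D_N^K \phi_m(\tau, z_1; \alpha)$ via Corollary~5.5. Unwinding Corollary~5.5 for $\phi_m$ regarded as a Jacobi form of index $L(m)$ (so that the bilinear form on $K^\perp$ inside $L(m)$ is $m B_{K^\perp}$ and $Q_{K(m)}(r_K) = m Q_K(r_K)$) yields the same Gegenbauer argument $(mn - Q_K(\rho_K))\, B_{K^\perp}/2$ once the Fourier indices are rescaled from $L(m)' = \tfrac{1}{m}L'$ back to the $L'$ parametrization inherited from the Fourier--Jacobi expansion of $F$. The main obstacle is the careful bookkeeping of conventions: sign differences between $L$ and $L(-1)$ (flipping $B_{K^\perp}$ and $Q_K$), an overall sign $(-1)^N$ absorbed by the substitution $\rho \to -\rho$ together with the homogeneity $G_N^s(-x, y) = (-1)^N G_N^s(x, y)$, and the rescaling of Fourier indices between the two index conventions for $\phi_m$.
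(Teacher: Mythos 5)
Your strategy is the same as the paper's: both sides are matched Fourier coefficient by Fourier coefficient, with the key observation being the weight-parameter identity $k - \tfrac{1}{2}\dim\bigl(K(-1)\oplus \mathrm{II}_{1,1}\bigr) = k - 1 - \tfrac{1}{2}\dim K$. The only structural difference is that the paper first reduces to the case where $K$ has codimension one in $L$ (via pure diagonal tensors $\alpha = v\otimes\cdots\otimes v$ and polarization) and then uses the explicit codimension-one formula of Corollary 3.6, whereas you invoke Theorem \ref{pullbacks} directly; since that theorem is itself proved by the same reduction, this is not a genuinely different route. Your computation of the pullback side, including the second Gegenbauer argument $(mn - Q_K(\rho_K))B_{K^\perp}/2$, is correct. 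The one step I would correct is your account of why this matches the development-coefficient side: the additive shift by $-Q_K(\rho_K)$ cannot come from rescaling $L(m)' = \tfrac{1}{m}L'$, which is a purely multiplicative renormalization (and in fact only affects the first, linear, Gegenbauer slot). It comes instead from the theta-decomposition shift built into Definition 5.4 and Corollary 5.5: the $q$-exponent of the vector-valued form underlying $\phi_m$ differs from the $q$-exponent of $\phi_m$ itself by $Q(r)$, and undoing the theta decomposition on the $K$-part is what converts the exponent $n$ appearing in the development formula into $n - Q_K(r_K)$ (in the paper's normalization, $mn - Q_K(\rho_K)$). If you trace Corollary 5.5 back through Definition 5.4 rather than reading its displayed formula literally, the two sides agree exactly as you claim.
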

\begin{proof} It is enough to prove this when $K$ has codimension $1$, since when $\alpha = v \otimes ... \otimes v$ is a pure diagonal we obtain both sides by first restricting from $L$ to $K \oplus \langle v \rangle$, and then pulling back or applying development coefficients to $K$.
Write out each term in the Fourier-Jacobi expansion as $$\phi_n(\tau,z) = \sum_{j=0}^{\infty} \sum_{r \in L'} c(j,r,n) q^j \zeta^r, \; q = e^{2\pi i \tau}, \; \zeta^r = e^{2\pi i r(z)}.$$ Fix $\lambda \in K^{\perp}$ of norm $Q_L(\lambda) = m$ and decompose $z = z_1 + z_2 \lambda$ where $z_1 \in K \otimes \mathbb{C}$. Write $\zeta_1^r = e^{2\pi i r(z_1)}$ for $r \in K'$. Since $k - 1 - (\mathrm{dim}\, K)/2 = k + (1 - \ell)/2$ where $\ell = \mathrm{dim}\, \Lambda$ we find \begin{align*} P_N^{K(-1) \oplus \mathrm{II}_{1,1}} F(\tau,z_1,w;\lambda) &= \sum_{n=0}^{\infty} \Big[ \sum_{j=0}^{\infty} \sum_{r \in K'} \Big( \sum_{\mu \in (2m)^{-1}\mathbb{Z}} c(j,(r,\mu),n) G_N^{k+(1-\ell)/2}(-2m \mu, -mQ_L(r)) \Big) q^j (\zeta_1)^r \Big] s^n \\ &= \sum_{n=0}^{\infty} D_N^K \phi(\tau,z_1) s^n. \qedhere \end{align*}
\end{proof}

\subsection{Theta lifts} Suppose $\Lambda$ is a Lorentzian lattice of signature $(1,\ell-1)$. If $$F(\tau) = \sum_{\gamma \in \Lambda'/\Lambda} \sum_{n \in \mathbb{Z} + Q(\lambda)} c(n,\gamma)q^n \mathfrak{e}_{\gamma}$$ is a cusp form of weight $k + 1 - \ell/2$ with $k \ge 2$, then the \emph{theta lift} $$\Phi_F(z) = \sum_{\lambda \in \Lambda' \cap P} \sum_{n=1}^{\infty} c(Q(\lambda),\lambda) n^{k-1} \mathbf{q}^{n \lambda}, \; z \in \mathbb{H}_{\Lambda}$$ is an orthogonal cusp form of weight $k$. (See \cite{Bn}, \cite{Br} for the approach to the theta lift using vector-valued modular forms; for the approach which takes Jacobi forms as input we refer to Gritsenko, e.g. \cite{GN}.)

\begin{prop} Suppose $F \in S_{k+1-\ell/2}(\rho_{\Lambda})$ is a cusp form of weight $k+1-\ell/2$ with $k \ge 2$ and $L \subseteq \Lambda$ is a negative-definite sublattice. Then the $N^{th}$ pullback of the theta lift $\Phi_F$ along $L$ equals the theta lift of the $N^{th}$ development coefficient of $F$ along $L$: $$P_N^L \Phi_F = \Phi_{D_N^L F}.$$
\end{prop}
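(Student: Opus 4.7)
The strategy is to match Fourier coefficients. Both $P_N^L \Phi_F$ and $\Phi_{D_N^L F}$ are orthogonal cusp forms of weight $k+N$ on $\mathbb{H}_{L^{\perp}}$ (the weights match because $\dim L + \dim L^{\perp} = \ell$), and multilinearity in $\alpha \in L^{\otimes N}$ is built into both sides. It therefore suffices to compare the coefficient of $\mathbf{q}^{\mu}$ for every $\mu \in (L^{\perp})'$ in the closed positive cone.

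On the left, the theta lift formula gives the Fourier coefficient of $\Phi_F$ at a general $(\mu,\nu) \in \Lambda' \subseteq (L^{\perp})' \oplus L'$ as the divisor sum
\[
b(\mu,\nu) \;=\; \sum_{\substack{n \ge 1 \\ (\mu/n,\,\nu/n) \,\in\, \Lambda'}} n^{k-1}\, c_F\bigl(Q(\mu/n)+Q(\nu/n),\,(\mu/n,\nu/n)+\Lambda\bigr).
\]
Applying Theorem \ref{pullbacks}, the $\mathbf{q}^{\mu}$-coefficient of $P_N^L \Phi_F(\,\cdot\,;\alpha)$ is
\[
\sum_{\nu \in L'} b(\mu,\nu)\cdot G_N^{k-\dim L^{\perp}/2}\bigl(\nu,\,-Q(\mu)B/2\bigr)(\alpha).
\]
On the right, the theta lift of the weight-$((k+N)+1-\dim L^{\perp}/2)$ form $D_N^L F$ gives
\[
[\Phi_{D_N^L F}]_{\mu}(\alpha) \;=\; \sum_{\substack{n \ge 1 \\ \mu/n \,\in\, (L^{\perp})'}} n^{k+N-1}\, c_{D_N^L F}\bigl(Q(\mu/n),\,\mu/n+L^{\perp}\bigr)(\alpha),
\]
and the explicit Fourier expansion of the development coefficients displayed after Definition 5.3 expands each $c_{D_N^L F}$ as an inner sum over $\nu_0 \in L'$ of Fourier coefficients of $F$ weighted by $G_N^{k-\dim L^{\perp}/2}\bigl(\nu_0,\,-Q(\mu/n)B/2\bigr)(\alpha)$.

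The two double sums are identified via the Gegenbauer rescaling identity
\[
G_N^s(nx,\,n^2 y) \;=\; n^N\, G_N^s(x,y),
\]
which was also the pivot in Corollary 3.6. Substituting $\nu = n\nu_0$ in the left-hand expression, the Gegenbauer factor picks up $n^N$, which combines with $n^{k-1}$ from the $F$-theta-lift to yield exactly $n^{k+N-1}$, the weight factor in the theta lift of $D_N^L F$. Interchanging the $n$- and $\nu_0$-summations then converts the left expression into the right.

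The main obstacle is bookkeeping. One must track carefully how the intertwiner $\downarrow^{\Lambda}_{L^{\perp} \oplus L}$ and the trace $\mathrm{Tr}_L$ of Section 5 identify representatives of $\Lambda'/\Lambda$ with pairs of representatives of $(L^{\perp})'/L^{\perp}$ and $L'/L$, and verify that the divisibility constraint $(\mu/n,\nu/n) \in \Lambda'$ arising on the left corresponds, after the substitution $\nu = n\nu_0$, precisely to the support condition defining $c_{D_N^L F}$ on the right. Once those identifications are handled, equality of Fourier coefficients reduces to the Gegenbauer rescaling identity and the coincidence of the two weight factors $n^{k-1} \cdot n^N = n^{k+N-1}$.
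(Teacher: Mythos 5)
Your proposal is correct and follows essentially the same route as the paper: both arguments reduce to comparing Fourier coefficients via the theta-lift formula on each side and pivoting on the homogeneity $G_N^s(nx,n^2y)=n^N G_N^s(x,y)$ to merge the factor $n^N$ with $n^{k-1}$ into $n^{k+N-1}$. The paper simply runs the computation in one direction (expanding $\Phi_{D_N^L F}$ and recognizing it as $P_N^L\Phi_F$) rather than meeting in the middle via the substitution $\nu=n\nu_0$, and it handles the lattice-coset bookkeeping at the same level of detail that you flag.
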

\begin{proof} Let $F$ have the Fourier expansion $$F(\tau) = \sum_{\gamma \in \Lambda'/\Lambda} \sum_{n \in \mathbb{Z} + Q(\gamma)} c(n,\gamma) q^n \mathfrak{e}_{\gamma}.$$ Then $D_N^L F$ is a modular form of weight $k+1-\ell/2+N + \mathrm{dim}\, L/2 = k+1+N - \mathrm{dim}\, L^{\perp}/2$ with Fourier expansion $$D_N^L F(\tau; \alpha) = \sum_{\gamma \in (L^{\perp})' / L^{\perp}} \sum_{n \in \mathbb{Z} + Q(\gamma)} \Big( \sum_{\substack{\lambda \in L' \\ (\gamma,\lambda) \in \Lambda'}} c(n+Q(\lambda),(\gamma,\lambda)) G_N^{k - \mathrm{dim}\, L^{\perp}/2}(\lambda, nB/2)(\alpha) \Big) q^n \mathfrak{e}_{\gamma}$$ and its theta lift is \begin{align*} \Phi_{D_N^L F(\alpha)} &= \sum_{\mu \in (L^{\perp})'} \sum_{n=0}^{\infty} n^{k+N-1} \sum_{\substack{\lambda \in L' \\ (\mu,\lambda) \in \Lambda'}} c\Big( Q(\mu) + Q(\lambda), (\mu,\lambda) \Big) G_N^{k - \mathrm{dim}\, L^{\perp}/2}(\lambda, Q(\mu) B / 2)(\alpha) \mathbf{q}^{n \lambda} \\ &= \sum_{\mu \in (L^{\perp})'} \sum_{n=0}^{\infty} n^{k-1} \sum_{\substack{\lambda \in L' \\ (\mu,\lambda) \in \Lambda'}} c \Big( Q(\mu)+Q(\lambda),(\mu,\lambda) \Big) G_N^{k-\mathrm{dim}\, L^{\perp}/2}(n \lambda, Q(n\mu)B/2)(\alpha) \mathbf{q}^{n\lambda}, \end{align*} i.e. the $N^{\text{th}}$ pullback of $$\Phi_F = \sum_{\lambda \in \Lambda'} \sum_{n=0}^{\infty} n^{k-1} c(Q(\lambda),\lambda) \mathbf{q}^{n \lambda}$$ along $L$ evaluated at $\alpha \in L^{\otimes N}$.
\end{proof}

\section{Special cases}

In this section we work out the pullbacks explicitly in three cases. These serve as examples of how these calculations are carried out in general and also may be of independent interest. \\

\subsection{Hilbert modular forms} Let $\mathcal{O}$ be an order in a real-quadratic field $K$, which can be understood as an even Lorentzian lattice with respect to the conjugate-trace form $\langle x, y \rangle = \mathrm{Tr}_{K/\mathbb{Q}}(xy')$ and quadratic form $Q(x) = xx' = N_{K/\mathbb{Q}}(x)$. Here and below, $x'$ denotes the conjugate of $x \in K$. Let $\mathcal{O}^{\#}$ be the dual lattice and let $d_{\mathcal{O}} = |\mathcal{O}^{\#}/\mathcal{O}|$ be the discriminant. (This simplifies for quadratic fields: if $d_K$ is the discriminant of $K$ then $\mathcal{O}$ must have the form $$\mathcal{O} = \mathbb{Z}\Big[f \cdot \frac{d_K + \sqrt{d_K}}{2}\Big], \; \text{where} \; f := [\mathcal{O}_K : \mathcal{O}],$$ and $d_{\mathcal{O}} = f^2 d_K$, and $\mathcal{O}^{\#} = \frac{1}{\sqrt{d_{\mathcal{O}}}} \mathcal{O}.$ See e.g. \cite{C} 7.A.) Then $\mathcal{O} \oplus \mathrm{II}_{1,1}$ is isometric to the lattice of conjugate-skew-symmetric matrices $\begin{psmallmatrix} a \sqrt{d_{\mathcal{O}}} & -b \\ b' & c / \sqrt{d_{\mathcal{O}}} \end{psmallmatrix}$ with $a,b,c \in \mathcal{O}$, which is acted upon by the \emph{Hilbert modular group} $$\Gamma_{\mathcal{O}} = \mathrm{PSL}_2(\mathcal{O}^{\#} \oplus \mathcal{O}) = \Big\{ \begin{psmallmatrix} a & b \\ c & d \end{psmallmatrix} \in \mathrm{PSL}_2(K): \; a,d \in \mathcal{O}, \, b \in \mathcal{O}^{\#},\, c \in (\mathcal{O}^{\#})^{-1} \Big\}$$ by conjugation $A \cdot M = A'MA^T$. (Here $(\mathcal{O}^{\#})^{-1}$ is the inverse in the group of fractional ideals of $K$.)

\begin{rem} The group $\mathrm{PSL}_2(\mathcal{O})$ (which is also often called the Hilbert modular group) appears in the orthogonal modular group attached to $\mathcal{O}$ with the \emph{negative} norm-form $Q(x) = -xx'$. Jordan decomposition shows that the discriminant forms ($\mathcal{O}^{\#}/\mathcal{O}, \pm Q)$ are equivalent if and only if $d_K$ is not divisible by any prime $p \equiv 3 \, (4)$. (It is exactly in these cases that the associated surfaces $\mathrm{PSL}_2(\mathcal{O}) \backslash (\mathbb{H} \times \mathbb{H}) $ and $\Gamma_{\mathcal{O}} \backslash (\mathbb{H} \times \mathbb{H})$ are isomorphic, as observed in \cite{H}.) If $\mathcal{O}_K$ contains a unit $\varepsilon$ of norm $-1$ (e.g. if $K = \mathbb{Q}(\sqrt{p}))$ for a prime $p \equiv 1 \, (4)$) then $x \mapsto \varepsilon x$ preserves the order $\mathcal{O}$ and gives an isometry on the level of lattices. The group $\Gamma_{\mathcal{O}}$ is natural in the moduli interpretation through which points of $\Gamma_{\mathcal{O}} \backslash (\mathbb{H} \times \mathbb{H})$ correspond to principally polarized abelian surfaces with special endomorphisms.
\end{rem}

One can interpret orthogonal modular forms for $\mathcal{O}$ as graded-symmetric Hilbert modular forms of the same (parallel) weight in a way that respects Fourier expansions. One of the positive cones for $\mathcal{O}$ can be identified with $\mathbb{H} \times \mathbb{H}$ by associating, to a modulus $z \in \mathbb{H}_{\mathcal{O}} \subseteq \mathcal{O} \otimes_{\mathbb{Z}} \mathbb{C}$ (which is most conveniently thought of as the $\mathbb{Z}$-linear map $\langle -,z \rangle : \mathcal{O}^{\#} \rightarrow \mathbb{C}$) the point $(\tau_1,\tau_2) \in \mathbb{H} \times \mathbb{H}$ which satisfies $$\nu' \tau_1 + \nu \tau_2 = \langle \nu/\sqrt{d_{\mathcal{O}}}, z \rangle \; \text{for all} \; \nu \in \mathcal{O}.$$ Proving that this is equivariant with respect to the action of $\Gamma_K$ on $\mathbb{H} \times \mathbb{H}$ and its orthogonal action on $\mathbb{H}_{\mathcal{O}}$ reduces to showing that the map $$\phi : \mathbb{H} \times \mathbb{H} \longrightarrow (\mathcal{O} \oplus \mathrm{II}_{1,1}) \otimes \mathbb{C}, \; \; \phi(\tau_1,\tau_2) = d_{\mathcal{O}}^{-1/2} \begin{psmallmatrix} \tau_1 \tau_2 & \tau_2 \\ \tau_1 & 1 \end{psmallmatrix}$$ satisfies $\phi(A \cdot (\tau_1,\tau_2)) = (\gamma \tau_1 + \delta)(\gamma' \tau_2 + \delta') A' \phi(\tau_1,\tau_2) A^T$ for every $A = \begin{psmallmatrix} \alpha & \beta \\ \gamma & \delta \end{psmallmatrix} \in \mathrm{PSL}_2(\mathcal{O})$. In particular if $$F(z) = \sum_{\nu \in \mathcal{O}^{\#}} c(\nu) \mathbf{q}^{\nu}, \; \mathbf{q}^{\nu} = e^{2\pi i \langle \nu, z \rangle}, \; z \in \mathbb{H}_{\mathcal{O}} \subseteq \mathcal{O} \otimes_{\mathbb{Z}} \mathbb{C}$$ is an orthogonal modular form then $f(\tau_1,\tau_2) = \sum_{\nu \in \mathcal{O}} c(\nu/\sqrt{d_K}) e^{2\pi i (\nu' \tau_1 + \nu \tau_2)}$ is a Hilbert modular form satisfying $f(\tau_2,\tau_1) = (-1)^k f(\tau_1,\tau_2).$ \\

Suppose $\lambda,\mu \in \mathcal{O}$ are nonzero vectors such that $\langle \lambda, \mu \rangle = 0$ and $\lambda$ is totally positive. Then the rational quadratic divisor on $\mathbb{H}_{\mathcal{O}}$ associated to the (negative-norm) vector $\mu$ corresponds to the Hirzebruch-Zagier curve $\{(\lambda \tau, \lambda' \tau): \; \tau \in \mathbb{H}\} \subseteq \mathbb{H} \times \mathbb{H}$ which embeds $X_0(N_{K/\mathbb{Q}} \lambda)$ in $\overline{\Gamma_{\mathcal{O}} \backslash (\mathbb{H} \times \mathbb{H})}$. The higher pullbacks from Hilbert modular forms to elliptic modular forms of level $\Gamma_0(N_{K/\mathbb{Q}} \lambda)$ obtained in this way are essentially Cohen's operators \cite{Co}: for a Hilbert modular form $f(\tau_1,\tau_2)$ of parallel weight $(k,k)$, define $$C_N^{\lambda} f(\tau) := (2\pi i)^{-N} \sum_{r=0}^N (-1)^r \binom{k+N-1}{r} \binom{k+N-1}{N-r} \lambda^r (\lambda')^{n-r} \Big( \frac{\partial^N}{\partial \tau_1^r \partial \tau_2^{N-r}} f \Big)(\lambda \tau, \lambda' \tau).$$

\begin{prop} Let $F(z) = \sum_{\nu \in \mathcal{O}^{\#}} c(\nu) \mathbf{q}^{\nu}$ be an orthogonal modular form of weight $k$ for $\mathcal{O}$ with corresponding Hilbert modular form $f(\tau_1,\tau_2) = \sum_{\nu \in \mathcal{O}} c(\nu/\sqrt{d_K}) e^{2\pi i (\nu' \tau_1 + \nu \tau_2)}$. Then its $N^{th}$ pullback to the curve $(\lambda \tau, \lambda' \tau)$ is, up to a nonzero multiple (which depends on $k$ and $N$ and $\lambda$), the $N^{th}$ Cohen operator: \begin{align*} P_N^{(\lambda,\lambda')\mathbb{H}} F(\tau;\mu) &=  \sum_{\nu \in \mathcal{O}^{\#}} c(\nu) G_N^{k-1/2}\Big(-\langle \nu, \mu \rangle, -\frac{Q(\mu)}{4Q(\lambda)} \langle \nu, z_{\lambda} \rangle^2 \Big) q^{\langle \nu, \lambda/\sqrt{d_{\mathcal{O}}} \rangle} \\ &= \frac{N!}{(k+\lfloor N/2 \rfloor) ... (k+N-1)} (\mu / \lambda)^N C_N^{\lambda} f(\tau). \end{align*}
\end{prop}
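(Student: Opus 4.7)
\noindent\emph{Proof plan.} The first equality is a direct specialization of Theorem~\ref{pullbacks} to the Lorentzian lattice $\Lambda = \mathcal{O}$ (of signature $(1,1)$, so $\ell = 2$) with sublattice $L = \mathbb{Z}\lambda$ and normal direction spanned by $\mu$. For each $\nu \in \mathcal{O}^{\#}$ the projection onto $L$ is $\pi_L(\nu) = \frac{\langle \nu,\lambda\rangle}{2 Q(\lambda)}\lambda$, so $Q(\pi_L(\nu)) = \langle \nu,\lambda\rangle^2/(4 Q(\lambda))$; inserting this together with the linear form $\nu \mapsto \langle \nu,\mu\rangle$ on $L^{\perp}$ and the parameter $s = k - \dim L/2 = k - 1/2$ into the Fourier expansion furnished by Theorem~\ref{pullbacks} yields the first displayed formula.

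For the second equality, introduce the auxiliary variables $u := \lambda\nu'$ and $v := \lambda'\nu$, which satisfy $u + v = \langle \nu,\lambda\rangle$ and $uv = Q(\lambda)Q(\nu)$. The orthogonality $\langle \lambda,\mu\rangle = 0$ reads $\lambda\mu' + \lambda'\mu = 0$, equivalently $\mu/\lambda = -\mu'/\lambda'$, and a direct computation gives
\[
\langle \nu,\mu\rangle = (\mu/\lambda)(u-v), \qquad Q(\mu)/Q(\lambda) = -(\mu/\lambda)^2.
\]
Hence the two arguments of $G_N^{k-1/2}$ on the left side of the Cohen identity are $-\langle \nu,\mu\rangle = (\mu/\lambda)(v-u)$ and $-(Q(\mu)/(4 Q(\lambda)))\langle \nu,\lambda\rangle^2 = (\mu/\lambda)^2(u+v)^2/4$, so the homogeneity $G_N^s(ax, a^2y) = a^N G_N^s(x,y)$ pulls out the scalar $(\mu/\lambda)^N$. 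Matching Fourier coefficients, the proposition reduces to the universal polynomial identity in $u,v$
\[
G_N^{k-1/2}\bigl(v-u,\,(u+v)^2/4\bigr) = \frac{N!}{(k+\lfloor N/2\rfloor)\cdots(k+N-1)}\sum_{r=0}^{N}(-1)^r\binom{k+N-1}{r}\binom{k+N-1}{N-r}u^r v^{N-r}.
\]

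This identity is a disguised form of the classical relation between Gegenbauer and ultraspherical Jacobi polynomials. Both sides equal $(u+v)^N$ times a polynomial of degree $N$ in $z := (v-u)/(u+v)$. Substituting $T = (u+v)t/2$ into the generating function $(1 - (v-u)t + (u+v)^2 t^2/4)^{-(k-1/2)}$ transforms it into $(1 - 2zT + T^2)^{-(k-1/2)}$, identifying $g_N^{k-1/2}(v-u,(u+v)^2/4)$ as $C_N^{k-1/2}(z) \cdot ((u+v)/2)^N$, where $C_N^{\alpha}$ is the classical Gegenbauer polynomial. On the other side, the standard expansion $P_N^{(\alpha,\alpha)}(z) = 2^{-N}\sum_r\binom{N+\alpha}{r}\binom{N+\alpha}{N-r}(z-1)^{N-r}(z+1)^r$ together with the parity $P_N^{(\alpha,\alpha)}(-z) = (-1)^N P_N^{(\alpha,\alpha)}(z)$ and homogeneity identifies the Cohen-type polynomial with $(u+v)^N P_N^{(k-1,k-1)}(z)$. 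The classical conversion $C_N^{k-1/2}(z) = \tfrac{(2k-1)_N}{(k)_N} P_N^{(k-1,k-1)}(z)$ combined with the Legendre duplication $(2k-1)_N = 2^N(k-1/2)_{\lceil N/2\rceil}(k)_{\lfloor N/2\rfloor}$ for Pochhammer symbols then collapses the product of normalization factors to exactly $N!/((k+\lfloor N/2\rfloor)\cdots(k+N-1))$.

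The main obstacle is the careful bookkeeping of these normalization constants: the algebraic manipulations in the first two paragraphs are forced by the orthogonality $\langle \lambda,\mu\rangle = 0$, but recognizing that the ratio of Gamma-function factors in $G_N^s$ combines with the Gegenbauer-Jacobi conversion through Legendre duplication to produce exactly the rational factor prescribed in the statement is the step requiring attention.
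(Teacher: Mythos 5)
Your argument is correct, and its first two steps (specializing the general pullback formula to $L=\mathbb{Z}\lambda$, then using $\lambda\mu'+\lambda'\mu=0$ and the homogeneity $G_N^s(ax,a^2y)=a^NG_N^s(x,y)$ to extract $(\mu/\lambda)^N$ and reduce everything to a universal polynomial identity in $u=\lambda\nu'$, $v=\lambda'\nu$) coincide with the paper's proof, which performs the same change of variables via the relation $Q(\lambda)\langle\nu,\mu\rangle=\lambda\mu'(\nu\lambda'-\nu'\lambda)$. Where you diverge is in how the central identity
\[
G_N^{k-1/2}\bigl(v-u,\tfrac{(u+v)^2}{4}\bigr)=\frac{N!}{(k+\lfloor N/2\rfloor)\cdots(k+N-1)}\sum_{r=0}^N(-1)^r\binom{k+N-1}{r}\binom{k+N-1}{N-r}u^rv^{N-r}
\]
is established. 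The paper isolates a bespoke generating-function lemma for Cohen's coefficients, namely
$\sum_N\frac{\Gamma(2k+N-1)}{\Gamma(k+N)}\sum_{r+s=N}\binom{k+N-1}{r}\binom{k+N-1}{s}x^ry^s=\frac{\Gamma(2k-1)}{\Gamma(k)}(1-2(x+y)+(x-y)^2)^{1/2-k}$, and proves it by verifying a first-order ODE in $y$ together with the initial value at $y=0$; it then matches this against the Gegenbauer generating function and simplifies the resulting Gamma quotient. You instead dehomogenize via $z=(v-u)/(u+v)$, $T=(u+v)t/2$, recognize $g_N^{k-1/2}$ as the classical one-variable Gegenbauer polynomial $C_N^{k-1/2}(z)\cdot((u+v)/2)^N$, identify the Cohen sum with $(u+v)^NP_N^{(k-1,k-1)}(z)$ through the standard finite expansion of Jacobi polynomials, and finish with the conversion $C_N^{k-1/2}=\frac{(2k-1)_N}{(k)_N}P_N^{(k-1,k-1)}$ plus the duplication identity $(2k-1)_N=2^N(k-1/2)_{\lceil N/2\rceil}(k)_{\lfloor N/2\rfloor}$; I checked that this bookkeeping does produce exactly the stated constant. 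Your route has the advantage of explaining \emph{why} the identity holds (it is the classical Gegenbauer--Jacobi relation in disguise) at the cost of importing several standard special-function facts, whereas the paper's ODE verification is self-contained but unilluminating. The only caveats are cosmetic: your convention $u^rv^{N-r}$ versus the paper's $v^ru^{N-r}$, and the orientation of the normal vector in the first argument of $G_N$, each shift the answer by $(-1)^N$, which is harmless since the claim is only asserted up to a nonzero multiple depending on $k$, $N$, $\lambda$.
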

Here we let $z_{\lambda} \in \mathbb{H}_{\mathcal{O}}$ denote the point corresponding to $(\lambda \tau, \lambda' \tau) \in \mathbb{H} \times \mathbb{H}$.
\begin{proof} To extract the first expression for $P_N F$ from the formula of section 3, consider that the coordinates of $\nu \in \mathcal{O}^{\#}$ with respect to the orthogonal splitting $\lambda,\mu$ of $\mathcal{O} \otimes \mathbb{C}$ are the projections $\frac{\langle \nu, \lambda \rangle}{2Q(\lambda)}, \frac{\langle \nu, \mu \rangle}{2Q(\mu)}$ and that $m = -Q(\mu)$. The rest of the proof relies on the fact that Cohen's coefficients $\binom{k+N-1}{r} \binom{k+N-1}{N-r}$, after rescaling by the factor $\frac{\Gamma(2k+N-1)}{\Gamma(k+N)}$, have a simple generating function which is similar to that of the Gegenbauer polynomials:

\begin{lem} As formal power series in $x$ and $y$, for any $k > 1/2$, $$\sum_{N=0}^{\infty} \frac{\Gamma(2k+N-1)}{\Gamma(k+N)} \sum_{r+s=N} \binom{k+N-1}{r} \binom{k+N-1}{s} x^r y^s = \frac{\Gamma(2k-1)}{\Gamma(k)} \Big( 1 - 2 (x+y) + (x-y)^2 \Big)^{1/2 - k}.$$

\end{lem}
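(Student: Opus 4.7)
The plan is to expand both sides as formal power series in $x,y$ and match coefficients of $x^r y^s$, reducing the resulting combinatorial identity to a classical hypergeometric evaluation.

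First I would rewrite the argument of the right-hand side as
\[ 1 - 2(x+y) + (x-y)^2 = (1-x-y)^2 - 4xy, \]
factor out $(1-x-y)^{1-2k}$, and then expand
\[ \Big(1 - \tfrac{4xy}{(1-x-y)^2}\Big)^{1/2-k} = \sum_{m=0}^{\infty} \frac{\Gamma(k-1/2+m)}{\Gamma(k-1/2)\,m!} \Big(\tfrac{4xy}{(1-x-y)^2}\Big)^m \]
using the binomial series, and finally expand each $(1-x-y)^{1-2k-2m}$ by the generalized binomial theorem for two variables:
\[ (1-x-y)^{1-2k-2m} = \sum_{a,b\ge 0} \frac{\Gamma(2k+2m-1+a+b)}{\Gamma(2k+2m-1)\,a!\,b!} x^a y^b. \]
Collecting terms, the coefficient of $x^r y^s$ on the right-hand side becomes
\[ \frac{\Gamma(2k-1)}{\Gamma(k)\Gamma(k-1/2)} \sum_{m=0}^{\min(r,s)} \frac{4^m\,\Gamma(k-1/2+m)\,\Gamma(2k+r+s-1)}{m!\,\Gamma(2k+2m-1)\,(r-m)!\,(s-m)!}. \]

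Next I would equate this with the coefficient of $x^r y^s$ on the left-hand side, namely
\[ \frac{\Gamma(2k+N-1)\,\Gamma(k+N)}{\Gamma(k+r)\Gamma(k+s)\Gamma(r+1)\Gamma(s+1)} \qquad (N = r+s), \]
and apply the Legendre duplication formula $\Gamma(2z) = \pi^{-1/2} 2^{2z-1} \Gamma(z)\Gamma(z+1/2)$ to both $\Gamma(2k-1)$ and $\Gamma(2k+2m-1)$. All factors of $2$ and $\sqrt{\pi}$ cancel, and the identity reduces to
\[ \sum_{m=0}^{\min(r,s)} \binom{r}{m}\binom{s}{m}\frac{m!}{(k)_m} = \frac{(k)_{r+s}}{(k)_r\,(k)_s}, \]
where $(k)_m = \Gamma(k+m)/\Gamma(k)$.

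Finally, I would recognize the left side as the Chu--Vandermonde sum
\[ {}_2F_1(-r,-s;k;1) = \sum_{m\ge 0} \frac{(-r)_m(-s)_m}{(k)_m\,m!} = \frac{(k+s)_r}{(k)_r}, \]
since $(-r)_m(-s)_m/m! = m!\binom{r}{m}\binom{s}{m}$, and the closed form $(k+s)_r/(k)_r = (k)_{r+s}/((k)_r(k)_s)$ is immediate.

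The main obstacle is simply bookkeeping: keeping the four nested series (one in $m$, one each in $a,b$, and the original series in $N$) straight, and applying the duplication formula so that the cleaned-up identity has a recognizable hypergeometric form. Once the identity is in $_2F_1$ shape, Chu--Vandermonde finishes it off.
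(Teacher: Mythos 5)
Your proposal is correct, and every step checks out: the coefficient of $x^ry^s$ you extract from the right-hand side via the splitting $(1-x-y)^2-4xy$ is right, the Legendre duplication formula does cancel all powers of $2$ and $\sqrt{\pi}$ (since $4^m\Gamma(k-1/2+m)/\Gamma(2k+2m-1)=\sqrt{\pi}\,2^{2-2k}/\Gamma(k+m)$), and the resulting identity $\sum_m\binom{r}{m}\binom{s}{m}m!/(k)_m=(k)_{r+s}/((k)_r(k)_s)$ is exactly ${}_2F_1(-r,-s;k;1)=(k+s)_r/(k)_r$ by Chu--Vandermonde. However, your route is genuinely different from the paper's. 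The paper instead verifies that the generating function $F(x,y,k)$ satisfies the first-order linear ODE $\bigl(1-2(x+y)+(x-y)^2\bigr)\partial_yF=(1-2k)(y-x-1)F$ together with the initial condition $F(x,0,k)=\Gamma(2k-1)\Gamma(k)^{-1}(1-x)^{1-2k}$, which pins down the closed form by uniqueness of formal power series solutions; the actual verification of the ODE on the coefficient side (a three-term recurrence) is left to the reader. Your approach trades that recurrence check for explicit coefficient extraction plus one classical hypergeometric evaluation; it is more self-contained and yields a clean closed form for each coefficient, at the cost of somewhat heavier series bookkeeping. Both are complete and of comparable length once the omitted details of either are written out.
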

\begin{proof} Once this identity has been conjectured (e.g. by comparing the pullbacks with Cohen's operators), it is straightforward to verify algebraically: for example, labelling $$F(x,y,k) := \sum_{r,s \ge 0} \binom{k+r+s-1}{r} \binom{k+r+s-1}{s} \frac{\Gamma(2k+r+s-1)}{\Gamma(k+r+s)} x^r y^s,$$ it is enough to verify that $F$ satisfies the differential equation (in $y$) $$\Big(1 - 2(x+y) + (x-y)^2 \Big) \partial_y F(x,y,k) = (1 - 2k) (y - x - 1) F(x,y,k)$$ and has initial value at $y=0$ $$F(x,0,k) = \sum_{r=0}^{\infty} \binom{k+r-1}{r} \frac{\Gamma(2k+r-1)}{\Gamma(k+r)} x^r = \frac{\Gamma(2k-1)}{\Gamma(k)} \sum_{r=0}^{\infty} \binom{2k+r-1}{r} x^r = \frac{\Gamma(2k-1)}{\Gamma(k) (1 - x)^{2k-1}}$$ by the binomial theorem. We omit the details.
\end{proof}

Using $\lambda \mu' = -\mu \lambda'$ and therefore $Q(\lambda) \langle \nu, \mu \rangle = \lambda \mu' (\nu \lambda' - \nu' \lambda)$ we obtain, for the non-normalized Gegenbauer polynomials $g_N$, $$g_N^{k-1/2}\Big( - \langle \nu, \mu \rangle, -\frac{Q(\mu)}{4Q(\lambda)} \langle \nu, \lambda \rangle^2 \Big) = \Big( - \frac{\lambda \mu'}{Q(\lambda)} \Big)^N g_N^{k-1/2}(\nu \lambda' - \nu' \lambda, (\nu \lambda' + \nu' \lambda)^2/4)$$ and after passing to generating functions and applying the lemma with $x = -\nu \lambda'$ and $y = \nu' \lambda$, \begin{align*} &\quad \sum_{N=0}^{\infty} g_N^{k-1/2}\Big( - \langle \nu, \mu \rangle, -\frac{Q(\mu)}{4Q(\lambda)} \langle \nu, \lambda \rangle^2 \Big) t^N \\ &= \Big( 1 + \frac{\lambda \mu'}{Q(\lambda)} (\nu \lambda' - \nu' \lambda) t + \frac{\lambda^2 \mu'^2}{4Q(\lambda)^2} (\nu \lambda' + \nu' \lambda)^2 t^2 \Big)^{1/2 - k} \\ &= \frac{(k-1)!}{(2k-2)!} \sum_{N=0}^{\infty} \Big( \frac{\lambda \mu' t}{2 Q(\lambda)} \Big)^N \sum_{r=0}^N (-1)^r \binom{k+N-1}{r} \binom{k+N-1}{N-r} \frac{\Gamma(2k+N-1)}{\Gamma(k+N)} (\nu \lambda')^r (\nu' \lambda)^{N-r}, \end{align*} i.e. \begin{align*} &\quad G_N^{k-1/2}\Big( - \langle \nu, \mu \rangle, -\frac{Q(\mu)}{4Q(\lambda)} \langle \nu, \lambda \rangle^2 \Big) \\ &=\frac{N! (k-1)! (2k+N-2)! \Gamma(k-1/2)}{(2k-2)!(k+N-1)! \Gamma(k-1/2+\lceil N/2 \rceil)} \Big(\frac{\lambda \mu'}{2 Q(\lambda)} \Big)^N \sum_{r=0}^N (-1)^r \binom{k+N-1}{r} \binom{k+N-1}{N-r} (\nu \lambda')^r (\nu' \lambda)^{N-r}. \end{align*} Cohen's operator has the effect on Fourier expansions of sending $e^{2\pi i (\nu' \lambda \tau_1 + \nu \lambda' \tau_2)}$ to exactly this series $\sum_{r=0}^N (-1)^r \binom{k+N-1}{r} \binom{k+N-1}{N-r} (\nu \lambda')^r (\nu' \lambda)^{N-r}$ multiplied by $e^{2\pi i \langle \nu/\sqrt{d_{\mathcal{O}}}, \lambda \rangle \tau}$. Finally, using the simplification $$\frac{N! (k-1)! (2k+N-2)! \Gamma(k-1/2)}{(2k-2)!(k+N-1)! \Gamma(k-1/2+\lceil N/2 \rceil)} = \frac{2^N N!}{(k+\lfloor N/2 \rfloor) ... (k+N-1)}$$we obtain the claim.
\end{proof}

\subsection{Siegel modular forms I} Let $\Lambda$ be the Lorentzian lattice of symmetric $(2 \times 2)$ integral matrices with quadratic form given by the determinant and with bilinear form $$\langle A,B \rangle =a_{11} b_{22} + a_{22} b_{11} - 2a_{12} b_{12} = \mathrm{tr}(AB^{adj}), \; A = \begin{psmallmatrix} a_{11} & a_{12} \\ a_{12} & a_{22} \end{psmallmatrix}, \; B = \begin{psmallmatrix} b_{11} & b_{12} \\ b_{12} & b_{22} \end{psmallmatrix} \in \Lambda.$$ Then $\Lambda \oplus \mathrm{II}_{1,1}$ is isometric to the lattice of integral antisymmetric $(4 \times 4)$ matrices which are orthogonal to $J = \begin{psmallmatrix} 0 & 0 & -1 & 0 \\ 0 & 0 & 0 & -1 \\ 1 & 0 & 0 & 0 \\ 0 & 1 & 0 & 0 \end{psmallmatrix}$ with respect to the \emph{Pfaffian form} (a quadratic form which squares to the determinant) and on which $$\mathrm{PSp}_4(\mathbb{Z}) = \{M \in \mathrm{PSL}_4(\mathbb{Z}): \; M^T J M = J\}$$ acts by conjugation. This action by conjugation identifies $\mathrm{PSp}_4(\mathbb{Z})$ with the orthogonal modular group $\Gamma_{\Lambda}$. \\

A Siegel modular form (of degree two) of weight $k \in \mathbb{N}_0$ is a holomorphic function $F : \mathbb{H}_2 \rightarrow \mathbb{C}$ satisfying $F(M \cdot z) = \mathrm{det}(cz + d)^k F(z)$ for all $M = \begin{psmallmatrix} a & b \\ c & d \end{psmallmatrix} \in \mathrm{PSp}_4(\mathbb{Z})$, where $M \cdot z = (az+b)(cz+d)^{-1}$. The holomorphy extends to cusps by Koecher's principle. Any Siegel modular form $F$ can be written out as a Fourier series $$F(z) = \sum_T c(T) q^T, \; c(T) \in \mathbb{C}, \; q^T = e^{2\pi i \mathrm{tr}(Tz)}$$ where $T$ runs through symmetric matrices with integral diagonal and half-integral off-diagonal entries; i.e. the dual lattice $\Lambda'$. The correspondence between Siegel modular forms and orthogonal modular forms is such that $\mathbb{H}_2$ is exactly the upper half-space $\mathbb{H}_{\Lambda}$ and that if $F(z) = \sum_T c(T) q^T$ is a Siegel modular form then $$F(\mathbf{z}) = \sum_{T \in \Lambda'} c(T) \mathbf{q}^{T}, \; \mathbf{q}^T = e^{2\pi i \langle T^{adj}, \mathbf{z} \rangle} = e^{2\pi i \mathrm{tr}(T\mathbf{z})}$$ is an orthogonal modular form of the same weight.

For any order $\mathcal{O}$ in a real-quadratic number field $K$, Siegel modular forms can be pulled back to Hilbert modular forms as in the previous subsection, i.e. for the group $\Gamma_K = \mathrm{PSL}_2(\mathcal{O}^{\#} \oplus \mathcal{O})$. (A good reference for this is Section 5 of \cite{MZ}.) Here the distinction between the Hilbert modular groups is important: if $K$ is such that $\mathrm{PSL}_2(\mathcal{O}) \backslash (\mathbb{H} \times \mathbb{H})$ and $\Gamma_K \backslash (\mathbb{H} \times \mathbb{H})$ are not isomorphic, then there is no modular embedding of $\mathrm{PSL}_2(\mathcal{O}) \backslash (\mathbb{H} \times \mathbb{H})$ in $\mathrm{PSp}_4(\mathbb{Z}) \backslash \mathbb{H}_2$ at all (by \cite{H2}) and therefore no pullback. 

Let $ax^2 + bx + c \in \mathbb{Z}[x]$ be a polynomial with splitting field $K$. Fix a root $\mu/\lambda \in K$ where $\lambda,\mu$ are algebraic integers and consider the order $\mathcal{O} := \mathbb{Z}[\lambda,\mu] \subseteq \mathcal{O}_K$ with discriminant $d_{\mathcal{O}}$. Define $A := \begin{psmallmatrix} a & -b/2 \\ -b/2 & c \end{psmallmatrix} \in \Lambda'$ and consider the embedding $$\phi : \mathbb{H} \times \mathbb{H} \longrightarrow \mathbb{H}_2 \cap A^{\perp}, \; \; \phi(\tau_1,\tau_2) := \begin{psmallmatrix} \lambda^2 \tau_1 + (\lambda')^2 \tau_2 & \lambda \mu \tau_1 + \lambda' \mu' \tau_2 \\ \lambda \mu \tau_1 + \lambda' \mu' \tau_2 & \mu^2 + (\mu')^2 \tau_2 \end{psmallmatrix} = \Omega^T \begin{psmallmatrix} \tau_1 & 0 \\ 0 & \tau_2 \end{psmallmatrix} \Omega, \; \text{where} \; \Omega := \begin{psmallmatrix} \lambda & \mu \\ \lambda' & \mu' \end{psmallmatrix}.$$ For any $M = \begin{psmallmatrix} \alpha & \beta \\ \gamma & \delta \end{psmallmatrix} \in \mathrm{PSL}_2(\mathcal{O}^{\#} \oplus \mathcal{O})$, $$\phi(M \cdot (\tau_1,\tau_2)) = \Psi(M) \cdot \phi(\tau_1,\tau_2), \; \text{where} \; \Psi(M) = \begin{pmatrix} \Omega^T & 0 \\ 0 & \Omega^{-1} \end{pmatrix} \begin{psmallmatrix} \alpha & 0 & \beta & 0 \\ 0 & \alpha' & 0 & \beta' \\ \gamma & 0 & \delta & 0 \\ 0 & \gamma' & 0 & \delta' \end{psmallmatrix} \begin{pmatrix} \Omega^{-T} & 0 \\ 0 & \Omega \end{pmatrix} \in \mathrm{Sp}_4(\mathbb{Z}).$$


In particular, if $F$ is a Siegel modular form then $F \circ \phi(\tau_1,\tau_2)$ is a Hilbert modular form. This should be thought of as the $0^{\mathrm{th}}$ pullback to the Humbert surface $A^{\perp}$.  We will write out the higher pullbacks. Any index $T = \begin{psmallmatrix} t_1 & t_2/2 \\ t_2/2 & t_3 \end{psmallmatrix} \in \Lambda'$ has orthogonal projection to $A^{\perp}$ given by $$\frac{\langle T,A \rangle}{\langle A,A \rangle} A = -\frac{a t_1 - b t_2/2 + c t_3}{d_{\mathcal{O}}}A$$ with norm $-(at_1 - bt_2/2 + ct_3)^2 / d_{\mathcal{O}}$. Therefore, if $F(z) = \sum_T c(T) \mathbf{q}^T$ is a Siegel modular form then its $N^{\text{th}}$ pullback to $A^{\perp}$ is \begin{align*} P_N^{A^{\perp}} F(\tau_1,\tau_2) &= \sum_T c(T) G_N^{k-1}\Big( -2 (at_1 - bt_2/2 + ct_3), d_{\mathcal{O}} \mathrm{det}(T) + (at_1 - bt_2/2 + ct_3)^2 \Big) \mathbf{q}^{\lambda^2 t_1 + \lambda\mu t_2 + \mu^2 t_3}, \end{align*} where $\mathbf{q}^{\nu} = e^{2\pi i (\nu \tau_1 + \nu' \tau_2)}$ as usual.

\begin{rem} Similar pullbacks can be written down for divisors $A^{\perp}$ where $\mathrm{det}(A)$ is a negative square (which, roughly speaking, corresponds to taking $\mathcal{O} \subseteq K := \mathbb{Q} \oplus \mathbb{Q}$). A particularly simple case is $A = \begin{psmallmatrix} 0 & 1 \\ 1 & 0 \end{psmallmatrix}$, where $\mathbb{H}_2 \cap A^{\perp}$ is the diagonal; the pullbacks in this case are tensor products of elliptic modular forms of level one, given by the formula $$P_N^{A^{\perp}} F(\tau_1,\tau_2) = \sum_T c(T) G_N^{k-1}(t_2,t_1 t_3) q_1^{t_1} q_2^{t_3}.$$ This is a special case of the more general operator defined by Ibukiyama (compare \cite{I}, 3.1.1).
\end{rem}

\begin{ex} We will work out an example involving the pullbacks of a meromorphic modular form to a Heegner divisor along which it has a pole following Remark 3.10. Let $q_1 = e^{2\pi i \tau_1}$, $r = e^{2\pi i w}$, $q_2 = e^{2\pi i \tau_2}$ and let $$\Psi_{10}\left( \begin{psmallmatrix} \tau_1 & w \\ w & \tau_2 \end{psmallmatrix}\right) = q_1 q_2 (r^{1/2} - r^{-1/2})^2 \Big( 1 - 2 (r + 10 + r^{-1})(q_1 + q_2) + O(q_1,q_2)^2 \Big)$$ be Igusa's weight 10 cusp form on $\mathbb{H}_2$: that is, up to a multiple, the discriminant of the genus two curve whose Jacobian has modulus $\tau = \begin{psmallmatrix} \tau_1 & w \\ w &\tau_2 \end{psmallmatrix}$ if $\tau$ is not equivalent to any point with $w = 0$, and it has a double zero along the diagonal. The physical meaning of the Fourier-Jacobi coefficients of $\Psi_{10}^{-1}$ is well-known (e.g. \cite{DMZ}). It was recently proved \cite{OP} that the \emph{Taylor} coefficients of $\Psi_{10}^{-1}$ about the divisor $w=0$ have a beautiful interpretation in terms of the reduced Gromov-Witten theory of spaces $X = S \times E$ where $S$ is a projective K3 surface and $E$ is an elliptic curve:

$$F\left( \begin{psmallmatrix} \tau_1 & w \\ w & \tau_2 \end{psmallmatrix} \right) := -w^2 \Psi_{10}^{-1}\left( \begin{psmallmatrix} \tau_1 & w \\ w & \tau_2 \end{psmallmatrix} \right) = \sum_{g=0}^{\infty} \sum_{h=0}^{\infty} \sum_{d=0}^{\infty} (2\pi)^{2g} N_{g,h,d}  q_1^{h-1} q_2^{d-1} w^{2g},$$ where $N_{g,h,d}$ is the Gromov-Witten invariant counting genus $g$ curves on $X$ up to translation in a fixed class $(\beta,d) \in H_2(X,\mathbb{Z})$. Here $\beta \in H_2(S,\mathbb{Z})$ is a primitive lattice vector satisfying $\beta \cdot \beta = 2h-2$, and $d \in \mathbb{N}_0$ is understood as an element of $H_2(E;\mathbb{Z})$.

We obtain meromorphic modular forms for $\mathrm{SL}_2(\mathbb{Z}) \times \mathrm{SL}_2(\mathbb{Z})$ (in fact, holomorphic modular forms divided by $\Delta(\tau_1)\Delta(\tau_2)$) by pretending that $F$ is modular of weight $(-12)$ and formally applying Ibukiyama's operators: $$P_N^{w=0} F(\tau_1,\tau_2) = (2\pi i)^{-N} G_N^{-13}(\partial_w, \partial_{\tau_1} \partial_{\tau_2}) \Big|_{w=0} F\left( \begin{psmallmatrix} \tau_1 & w \\ w & \tau_2 \end{psmallmatrix} \right).$$ Explicitly these pullbacks have Fourier expansions $$P_N^{w=0} F(\tau_1,\tau_2) = \sum_{h=0}^{\infty} \sum_{d=0}^{\infty} \sum_{\substack{2(n_1 + g) = N \\ n_1 + 2g \le 13 \\ n_1,g \ge 0}} (-1)^g \frac{13!}{n_1! (2g)! (13 - n_1 - 2g)!} N_{g,h,d}  (h-1)^{n_1} (d-1)^{n_1} q_1^{h-1} q_2^{d-1}.$$ These vanish trivially when $N > 26$ or when $N$ is odd. In the first two nontrivial cases we find: \\
(i) When $N = 0$, $$P_0^{w=0} F(\tau_1,\tau_2) = \sum_{h,d=0}^{\infty} N_{0,h,d} q_1^{h-1} q_2^{d-1} = \frac{1}{\Delta(\tau_1)\Delta(\tau_2)};$$
(ii) When $N=2$, the nonexistence of (holomorphic) modular forms for $\mathrm{SL}_2(\mathbb{Z}) \times \mathrm{SL}_2(\mathbb{Z})$ of weight two yields $$P_2^{w=0} F(\tau_1,\tau_2) = 13 \sum_{h,d=0}^{\infty} \Big(  -6 N_{1,h,d} + (h-1)(d-1) N_{0,h,d} \Big) q_1^{h-1} q_2^{d-1} = 0.$$ Since $\Psi_{10}$ is an additive theta lift of a Jacobi cusp form $\phi_{10, 1}$ of weight 10 (cf. \cite{GN2}), the pulled-back forms can also be expressed as theta lifts of development coefficients of $\phi_{10, 1}$.
\end{ex}

\subsection{Siegel modular forms II} Let $A$ be a symmetric positive-definite $(2 \times 2)$ matrix with integral entries. If $F : \mathbb{H}_2 \rightarrow \mathbb{C}$ is a Siegel modular form of weight $k$ then $f(\tau) := F(A\tau)$, $\tau \in \mathbb{H}$ satisfies $$f\Big(\frac{a \tau + b}{c \tau + d} \Big) = F \Big( \begin{psmallmatrix} aI & bA \\ cA^{-1} & dI \end{psmallmatrix} \cdot (A\tau) \Big) = (c\tau + d)^{2k} f(\tau) \; \text{for all} \; \begin{psmallmatrix} a & b \\ c & d \end{psmallmatrix} \in \Gamma_0(\mathrm{det}\, A).$$ This should be thought of as the $0^{\mathrm{th}}$ pullback from orthogonal modular forms for $\mathrm{SO}^+(2,3)$ to an embedded curve corresponding to $\mathrm{SO}^+(2,1)$. We will consider the higher pullbacks. \\

The orthogonal projection of $T^{adj} \in \Lambda'$ to $A$ is $\frac{\mathrm{tr}(TA)}{2\mathrm{det}(A)}A$ with norm $\frac{\mathrm{tr}(TA)^2}{4 \mathrm{det}(A)}$. For any $B \in A^{\perp}$ we obtain the $N^{\text{th}}$ pullback in the direction $B \otimes ... \otimes B$ of $F(z) = \sum_T c(T) \mathbf{q}^T$ as $$P_N F(\tau;B \otimes ... \otimes B) = \sum_T c(T) G_N^{k-1/2}\Big( \mathrm{tr}(TB), -\frac{\mathrm{det}(B)\mathrm{tr}(TA)^2}{4 \mathrm{det}(A)} \Big) q^{\mathrm{tr}(TA)}, \; \; q^n = e^{2\pi i \tau},$$ and we obtain the pullbacks in general directions by polarization. \\

For example, the lowest order pullbacks to $A\mathbb{H} \subseteq \mathbb{H}_2$ are $$P_0 F(\tau) = F(A\tau) = \sum_T c(T) q^{\mathrm{tr}(TA)} \in M_{2k}(\Gamma_0(\mathrm{det}(A));$$ $$P_1 F(\tau;B) =  \sum_T c(T) \mathrm{tr}(TB) q^{\mathrm{tr}(TA)} \in S_{2k+2}(\Gamma_0(\mathrm{det}(A)));$$ \begin{align*} P_2 F(\tau;B_1 \otimes B_2)  &= \frac{1}{4\mathrm{det}(A)} \sum_T c(T) \Big( (4k+2) \mathrm{det}(A) \mathrm{tr}(TB_1) \mathrm{tr}(TB_2) + \mathrm{tr}(B_1^{adj}B_2) \mathrm{tr}(TA)^2) \Big) q^{\mathrm{tr}(TA)} \\ &\in S_{2k+4}(\Gamma_0(\mathrm{det}(A))); \end{align*} and \begin{align*} P_3 F(\tau;B_1 \otimes B_2 \otimes B_3) &= \frac{1}{4 \mathrm{det}(A)} \sum_T c(T) \Big[ (4k+6) \mathrm{det}(A) \mathrm{tr}(TB_1) \mathrm{tr}(TB_2) \mathrm{tr}(TB_3) \\ &\quad + \Big(\mathrm{tr}(B_1^{adj} B_2) \mathrm{tr}(TB_3) + \mathrm{tr}(B_2^{adj} B_3) \mathrm{tr}(TB_1) + \mathrm{tr}(B_3^{adj} B_1) \mathrm{tr}(TB_2)\Big) \mathrm{tr}(TA)^2 \Big] q^{\mathrm{tr}(TA)} \\ & \in S_{2k+6}(\Gamma_0(\mathrm{det}(A))). \end{align*} Here $B$, $B_i$ are any symmetric (but not positive definite) complex matrices with $\mathrm{tr}(B_i^{adj} A) = 0$. Note that if $\mathrm{det}(A) = 1$ then all odd-order pullbacks are zero.

\begin{ex} (See also \cite{GN}, Example 3.9) As a numerical example, let $q = e^{2\pi i \tau}, \; r = e^{2\pi i z}, \; s = e^{2\pi i w}$ and let \begin{align*} \Psi_{35}\left( \begin{psmallmatrix} \tau & z \\ z & w \end{psmallmatrix} \right) &= q^2 s^2 (q-s) (r - r^{-1}) \Big[ 1 - (q+s) (r^2 + 70 + r^{-2}) + 69(q^2 + s^2)(r^2 + 33 + r^{-2}) \\ &\quad + qs(r^4 + 70r^2 - 32384r - 127074 - 32384r^{-1} + 70r^{-2} + r^{-4}) + O(q,s)^3 \Big] \end{align*} be the unique (up to scalar) Siegel modular form of weight $35$. Take $A = \begin{psmallmatrix} 1 & 0 \\ 0 & 1 \end{psmallmatrix}$ and fix the basis $B_1 = \begin{psmallmatrix} 1 & 0 \\ 0 & -1 \end{psmallmatrix}$ and $B_2 = \begin{psmallmatrix} 0 & 1 \\ 1 & 0 \end{psmallmatrix}$ of $A^{\perp} \otimes \mathbb{C}$. Then $P_0 \Psi_{35}$, $P_1 \Psi_{35},$ $P_3 \Psi_{35}$ and $P_2 \Psi_{35}(\tau;B_1 \otimes B_1), P_2 \Psi_{35}(\tau;B_2 \otimes B_2)$ are zero; but $$P_2 \Psi_{35}(\tau;B_1 \otimes B_2) = P_2 \Psi_{35}(\tau;B_2 \otimes B_1) = 71q^5 - 10224q^6 - 13257972q^7 \pm ... = 71 E_4^2 E_6 \Delta^5 \in S_{74}(\mathrm{SL}_2(\mathbb{Z})).$$
\end{ex}

\bibliographystyle{plainnat}
\bibliofont
\bibliography{\jobname}

\begin{thebibliography}{20}
\providecommand{\natexlab}[1]{#1}
\providecommand{\url}[1]{\texttt{#1}}
\expandafter\ifx\csname urlstyle\endcsname\relax
  \providecommand{\doi}[1]{doi: #1}\else
  \providecommand{\doi}{doi: \begingroup \urlstyle{rm}\Url}\fi

\bibitem[Borcherds(1998)]{Bn}
Richard Borcherds.
\newblock Automorphic forms with singularities on {G}rassmannians.
\newblock \emph{Invent. Math.}, 132\penalty0 (3):\penalty0 491--562, 1998.

\bibitem[Bruinier(2002)]{Br}
Jan Bruinier.
\newblock \emph{Borcherds products on {O}(2, {$l$}) and {C}hern classes of
  {H}eegner divisors}, volume 1780 of \emph{Lecture Notes in Mathematics}.
\newblock Springer-Verlag, Berlin, 2002.

\bibitem[Choie and Eholzer(1998)]{CE}
YoungJu Choie and Wolfgang Eholzer.
\newblock Rankin-{C}ohen operators for {J}acobi and {S}iegel forms.
\newblock \emph{J. Number Theory}, 68\penalty0 (2):\penalty0 160--177, 1998.

\bibitem[Choie and Kim(2000)]{CK}
YoungJu Choie and Haesuk Kim.
\newblock Differential operators on {J}acobi forms of several variables.
\newblock \emph{J. Number Theory}, 82\penalty0 (1):\penalty0 140--163, 2000.

\bibitem[Cohen(1975)]{Co}
Henri Cohen.
\newblock Sums involving the values at negative integers of {$L$}-functions of
  quadratic characters.
\newblock \emph{Math. Ann.}, 217\penalty0 (3):\penalty0 271--285, 1975.

\bibitem[Cox(1989)]{C}
David Cox.
\newblock \emph{Primes of the form {$x^2 + ny^2$}}.
\newblock A Wiley-Interscience Publication. John Wiley \& Sons, Inc., New York,
  1989.
\newblock Fermat, class field theory and complex multiplication.

\bibitem[Dabholkar et~al.(2012)Dabholkar, Murthy, and Zagier]{DMZ}
Atish Dabholkar, Sameer Murthy, and Don Zagier.
\newblock Quantum black holes, wall crossing, and mock modular forms.
\newblock arXiv:1208.4074, 2012.

\bibitem[Eholzer and Ibukiyama(1998)]{EI}
Wolfgang Eholzer and Tomoyoshi Ibukiyama.
\newblock Rankin-{C}ohen type differential operators for {S}iegel modular
  forms.
\newblock \emph{Internat. J. Math.}, 9\penalty0 (4):\penalty0 443--463, 1998.

\bibitem[Eichler and Zagier(1985)]{EZ}
Martin Eichler and Don Zagier.
\newblock \emph{The theory of {J}acobi forms}, volume~55 of \emph{Progress in
  Mathematics}.
\newblock Birkh\"auser Boston, Inc., Boston, MA, 1985.

\bibitem[Gritsenko(1994)]{G}
Valery Gritsenko.
\newblock Modular forms and moduli spaces of abelian and {$K3$} surfaces.
\newblock \emph{Algebra i Analiz}, 6\penalty0 (6):\penalty0 65--102, 1994.

\bibitem[Gritsenko and Nikulin(1998)]{GN}
Valery Gritsenko and Viacheslav Nikulin.
\newblock Automorphic forms and {L}orentzian {K}ac-{M}oody algebras. {II}.
\newblock \emph{Internat. J. Math.}, 9\penalty0 (2):\penalty0 201--275, 1998.

\bibitem[Gritsenko et~al.(2007)Gritsenko, Hulek, and Sankaran]{GHS}
Valery Gritsenko, Klaus Hulek, and Gregory Sankaran.
\newblock The {K}odaira dimension of the moduli of {$K3$} surfaces.
\newblock \emph{Invent. Math.}, 169\penalty0 (3):\penalty0 519--567, 2007.

\bibitem[Hammond(1966)]{H2}
William Hammond.
\newblock The modular groups of {H}ilbert and {S}iegel.
\newblock \emph{Amer. J. Math.}, 88:\penalty0 497--516, 1966.

\bibitem[Hammond(1977)]{H}
William Hammond.
\newblock The two actions of {H}ilbert's modular group.
\newblock \emph{Amer. J. Math.}, 99\penalty0 (2):\penalty0 389--392, 1977.

\bibitem[Ibukiyama(1999)]{I}
Tomoyoshi Ibukiyama.
\newblock On differential operators on automorphic forms and invariant
  pluri-harmonic polynomials.
\newblock \emph{Comment. Math. Univ. St. Paul.}, 48\penalty0 (1):\penalty0
  103--118, 1999.

\bibitem[Ibukiyama(2019)]{I2}
Tomoyoshi Ibukiyama.
\newblock One-line formula for automorphic differential operators on {S}iegel
  modular forms.
\newblock \emph{Abh. Math. Semin. Univ. Hambg.}, 89\penalty0 (1):\penalty0
  17--43, 2019.

\bibitem[Ibukiyama and Zagier()]{IZ}
Tomoyoshi Ibukiyama and Don Zagier.
\newblock Higher spherical polynomials.
\newblock Preprint.
\newblock URL
  \url{https://people.mpim-bonn.mpg.de/zagier/files/tex/Spherical/Spherical.1.pdf}.

\bibitem[M\"{o}ller and Zagier(2016)]{MZ}
Martin M\"{o}ller and Don Zagier.
\newblock Modular embeddings of {T}eichm\"{u}ller curves.
\newblock \emph{Compos. Math.}, 152\penalty0 (11):\penalty0 2269--2349, 2016.

\bibitem[Oberdieck and Pixton(2018)]{OP}
Georg Oberdieck and Aaron Pixton.
\newblock Holomorphic anomaly equations and the {I}gusa cusp form conjecture.
\newblock \emph{Invent. Math.}, 213\penalty0 (2):\penalty0 507--587, 2018.

\bibitem[Zagier(2008)]{Z}
Don Zagier.
\newblock Elliptic modular forms and their applications.
\newblock In \emph{The 1-2-3 of modular forms}, Universitext, pages 1--103.
  Springer, Berlin, 2008.

\end{thebibliography}

\end{document}